\newcommand\reallywidehat[1]{%
\savestack{\tmpbox}{\stretchto{%
  \scaleto{%
    \scalerel*[\widthof{\ensuremath{#1}}]{\kern-.6pt\bigwedge\kern-.6pt}%
    {\rule[-\textheight/2]{1ex}{\textheight}}
  }{\textheight}%
}{0.5ex}}%
\stackon[1pt]{#1}{\tmpbox}%
}
\newcommand{\calD}{\mathcal{D}}
\newcommand{\calE}{\mathcal{E}}
\newcommand{\calF}{\mathcal{F}}
\newcommand{\calK}{\mathcal{K}}
\newcommand{\calL}{\mathcal{L}}
\newcommand{\calS}{\mathcal{S}}
\newcommand{\mA}{\mathbb{A}}
\newcommand{\mC}{\mathbb{C}}
\newcommand{\mN}{\mathbb{N}}
\newcommand{\mR}{\mathbb{R}}
\newcommand{\mZ}{\mathbb{Z}}
\newcommand{\bbe}{\mathbf{e}}
\newcommand{\bbk}{\mathbf{k}}
\newcommand{\bbn}{\mathbf{n}}
\newcommand{\bbx}{\mathbf{x}}
\newcommand{\bby}{\mathbf{y}}
\newcommand{\bbz}{\mathbf{z}}
\newcommand{\bbX}{\mathbf{X}}
\newcommand{\beeta}{\bm{\eta}}
\newcommand{\bsigma}{\bm{\sigma}}
\newcommand{\btau}{\bm{\tau}}
\newcommand{\bxi}{\bm{\xi}}
\newcommand{\mybinom}[2]{\Bigl(\begin{array}{@{}c@{}}#1\\#2\end{array}\Bigr)}
\newtheorem{theorem}{Theorem}[section]
\newtheorem{lemma}[theorem]{Lemma}
\newtheorem{corollary}[theorem]{Corollary}
\newtheorem{proposition}[theorem]{Proposition}
\theoremstyle{definition}
\newtheorem{example}[theorem]{Example}
\theoremstyle{definition}
\newtheorem{definition}[theorem]{Definition}
\theoremstyle{definition}
\begin{document}

\keywords{partial differential equations, null solutions,
  distributions that are tempered in the spatial directions}

\subjclass[2010]{Primary 35A02; Secondary  58J40, 35A24, 35A22, 46F12, 32C25}

\title[Existence of null solutions]{On the existence of 
\\spatially tempered null solutions to \\
linear constant coefficient PDEs}

\author[]{Amol Sasane}
\address{Mathematics Department, London School of Economics, 
Houghton Street, London WC2A 2AE, U.K.}
\email{A.J.Sasane@lse.ac.uk}

\begin{abstract} 
Given a linear, constant coefficient partial differential equation 
in $\mR^{d+1}$, where one independent variable plays the role of
`time', a distributional solution is called a null solution if its
past is zero. Motivated by physical considerations, we consider
distributional solutions that are tempered in the spatial
directions alone (and do not impose any restriction in the time direction). 
Considering such spatially tempered distributional
solutions, we give an algebraic-geometric characterization, in terms
of the polynomial describing the PDE at hand, for the null solution
space to be trivial (that is, consisting only of the zero
distribution).
\end{abstract}

\maketitle

\section{Introduction}

\noindent 
Given a polynomial $p\in \mC[X_1,\cdots, X_d,T]=:\mC[\bbX,T]$, we
associate with it the linear constant coefficient differential
operator $ D_p$ by making the replacements
\begin{eqnarray*}
X_k&\rightsquigarrow&\frac{\partial}{\partial x_k},\quad k=1,\cdots,d\\
T&\rightsquigarrow& \frac{\partial}{\partial t}.
\end{eqnarray*}

\begin{definition}[Solution space]$\;$

\noindent 
A {\em solution space} is a subspace $S$ of the space of distributions
$\calD'(\mR^{d+1})$.
\end{definition}

\noindent Throughout this article, unless otherwise indicated, we will
use the standard distribution theory notation from Schwartz \cite{Sch}
or Tr\'eves \cite{Tre}.  If we fix a solution space $S$, a polynomial
$p\in \mC[\bbX,T]$ gives rise to the differential operator
$D_p :S\rightarrow \calD'(\mR^{d+1})$, defined by
$$
D_p u:= p\left( \frac{\partial}{\partial x_1},\cdots,
  \frac{\partial}{\partial x_d},\frac{\partial}{\partial t}\right)
u,\quad u\in S.
$$

\goodbreak

\begin{definition}[Null solution, null solution space]$\;$

\noindent 
Let $p\in \mC[\bbX,T]$, and $S$ be a solution space.  A {\em null
  solution in $S$ associated with $p$} is a distribution $u\in S$ such
that
\begin{itemize}
 \item (Solution) $\;D_p u=\mathbf{0}$
 \item (Null past) $u|_{t<0}=\mathbf{0}$
\end{itemize}
We denote by $N_S(p)$ the subspace of $S$ consisting of all null solutions
in $S$ associated with $p$:
$$
N_S(p):=\{u\in S:  D_pu=\mathbf{0}\textrm{ and  }u|_{t<0}=\mathbf{0} \}.
$$
\end{definition}

\noindent The notion of a null solution was considered in
\cite{Hor0} and \cite{Hor3}.  

We are interested in giving an
algebraic-geometric characterisation of the polynomials $p$ for which
$N_S(p)$ is just the trivial subspace $\{\mathbf{0}\}$, consisting of
only the zero distribution $\mathbf{0}$. Such a characterization is 
 expected to depend on the solution space $S$, as illustrated by the two 
results below, Propositions~\ref{prop_a} and \ref{prop_Hor}. In the following, $\calD(\mR^{d+1})$ denotes the space of all 
 compactly supported smooth ($\textrm{C}^\infty$) functions on $\mR^{d+1}$, and 
 $\calE'(\mR^{d+1})$ denotes the space of all compactly supported distributions on $\mR^{d+1}$. 

\begin{proposition}
\label{prop_a}
 Let $S=\calE'(\mR^{d+1})$ or $S= \calD(\mR^{d+1})$.  Let $p\in \mC[\bbX,T]$. 

 \noindent Then $N_{S}(p)=\{\mathbf{0}\}$ if and only if  $p\neq \mathbf{0}$.
\end{proposition}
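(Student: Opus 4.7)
\medskip

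\noindent\textbf{Proof plan.} The plan is to handle the two directions separately, with the hard direction reducing to a Fourier--Laplace argument via Paley--Wiener--Schwartz.

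For the ``only if'' direction, I would argue by contrapositive: assume $p=\mathbf{0}$ and exhibit a nonzero element of $N_S(p)$. When $D_p\equiv 0$, every distribution satisfies the equation, so I only need something in $S$ whose support lies in $\{t\geq 0\}$. For $S=\calD(\mR^{d+1})$, pick a nonzero bump function $\varphi\in \calD(\mR^{d+1})$ with $\mathrm{supp}\,\varphi\subset \mR^d\times[1,2]$; for $S=\calE'(\mR^{d+1})$, take the Dirac distribution $\delta_{(\bbzero,1)}$. In both cases the element is nonzero and satisfies the null-past condition trivially, so $N_S(p)\neq\{\mathbf{0}\}$.

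For the nontrivial direction, suppose $p\neq\mathbf{0}$ and let $u\in N_S(p)$. Since $\calD(\mR^{d+1})\subset \calE'(\mR^{d+1})$, in either case $u$ is a compactly supported distribution on $\mR^{d+1}$. I would apply the Paley--Wiener--Schwartz theorem: the Fourier transform $\widehat{u}$ extends to an entire function on $\mC^{d+1}$ of exponential type. The equation $D_pu=\mathbf{0}$ translates, after Fourier transforming, into the identity
\[
p(i\xi_1,\ldots,i\xi_d,i\tau)\,\widehat{u}(\xi_1,\ldots,\xi_d,\tau)=0
\qquad \text{for all } (\xi,\tau)\in\mR^{d+1},
\]
and then by analytic continuation for all $(\xi,\tau)\in \mC^{d+1}$. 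Since $p\neq\mathbf{0}$, the polynomial $(\xi,\tau)\mapsto p(i\xi,i\tau)$ is a nonzero entire function on $\mC^{d+1}$, hence its zero set has empty interior. Therefore $\widehat{u}$ vanishes on a dense open subset of $\mC^{d+1}$, and by continuity (or the identity theorem for holomorphic functions) $\widehat{u}\equiv 0$. Fourier inversion then gives $u=\mathbf{0}$.

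Notice that the null-past condition is not used in the nontrivial direction; the stronger statement that every compactly supported solution of $D_p u=\mathbf{0}$ for $p\neq\mathbf{0}$ is zero is already a classical consequence of Paley--Wiener--Schwartz. I do not anticipate a real obstacle: the only point requiring mild care is the sign/normalisation convention for the Fourier transform (so that $\partial/\partial x_k$ corresponds to multiplication by $i\xi_k$), and the observation that a nonzero polynomial on $\mC^{d+1}$ cannot annihilate a nontrivial entire function.
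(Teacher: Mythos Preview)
Your proposal is correct and follows essentially the same route as the paper: both directions hinge on Paley--Wiener--Schwartz to pass to entire functions and then use that a nonzero polynomial cannot annihilate a nonzero entire function (the paper phrases this as the ring of entire functions being an integral domain, you phrase it via the identity theorem, which amounts to the same thing). Your treatment of the ``only if'' direction is in fact slightly more careful than the paper's, since you explicitly produce elements with zero past rather than asserting $N_S(p)=S$.
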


\goodbreak 

\begin{proof} (`If' part): Suppose that $D_pu=\mathbf{0}$ and that $p\neq \mathbf{0}$. By the Payley-Wiener-Schwartz theorem 
\cite[Prop. 29.1,p. 307]{Tre}, the Fourier transform $\calF u$ of $u$ (with respect to {\em all} the variables) can be extended 
to an entire function on $\mC^{d+1}$. Thus $D_pu=\mathbf{0}$ yields that for all $\bbz\in \mC^{d+1}$, 
$$
p(i\mathbf{z}) \cdot (\calF u)(\mathbf{z})=0.
$$
But the ring $\textrm{A}(\mC^{d+1})$ of entire functions in $d+1$ complex variables forms an integral domain.  
As $p(i\cdot)\neq \mathbf{0}$ in $\textrm{A}(\mC^{d+1})$, we conclude that $\calF u=\mathbf{0}$, and so $u=\mathbf{0}$ too. 
Hence $N_S(p)=\{\mathbf{0}\}$. 

\medskip 

\noindent (`Only if' part): Suppose that $p=\mathbf{0}$. Then clearly $N_S(p)=S\neq \{\mathbf{0}\}$. 
\end{proof}

\noindent 
On the other hand,  when $S=\textrm{C}^\infty(\mR^{d+1})$ or
$\calD'(\mR^{d+1})$, using two results 
due to H\"ormander \cite[Theorems~8.6.7, 8.6.8]{Hor}, one can show the result below. 
Here, $\deg(\cdot)$ is used to denote the {\em total degree},
which is the maximum (over the monomials
$X_1^{i_1} \cdots X_d^{i_d} T^{i_{d+1}}$ occurring in the polynomial)
of the sum of the degrees of the exponents of indeterminates in the
monomial. Also if we decompose $p=p_m+p_{m-1}+\cdots+p_0$, where $p_j$ is homogeneous of total degree $j$ and $p_m\neq \mathbf{0}$, 
then $p_m$ is called the {\em principal part} of $p$. Let $\langle\cdot,\cdot\rangle_{\mR^{d+1}}$ denote the Euclidean inner product on $\mR^{d+1}$. 
Given a vector $\bbn\in \mR^{d+1}$, the plane $\{\bby:\langle \bby,\bbn\rangle_{\mR^{d+1}}=0\}$   
is called {\em characteristic with respect to $D_p$} 
if $p_m(\bbn)=0$. 

\begin{proposition}
\label{prop_Hor}
Let $S=\calD'(\mR^{d+1})$ or $\textrm{\em C}^\infty(\mR^{d+1})$ and let $p\in
\mC[\bbX,T]$.  Then $N_{S}(p)=\{\mathbf{0}\}$ if and only if  $
\deg(p(\bbX,T))=\deg(p(\mathbf{0},T))$.
\end{proposition}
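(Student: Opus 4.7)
The plan is to interpret the numerical condition $\deg(p(\bbX,T))=\deg(p(\mathbf{0},T))$ as the geometric statement that the hyperplane $\{t=0\}$ is non-characteristic for $D_p$. Setting $m=\deg(p)$, the equality $\deg(p(\mathbf{0},T))=m$ says that the coefficient of $T^m$ in $p$ is nonzero, and since $T^m$ can only appear in the principal part $p_m$, this is equivalent to $p_m(\mathbf{0},\ldots,0,1)\neq 0$, i.e., the normal $\bbn=(0,\ldots,0,1)$ to $\{t=0\}$ is non-characteristic with respect to $D_p$. This reformulation is what enables the application of the two quoted H\"ormander theorems.

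For the `if' direction, fix $u\in N_S(p)$, so $D_pu=\mathbf{0}$ and $\textrm{supp}(u)\subset\{t\geq 0\}$. Since $\textrm{C}^\infty(\mR^{d+1})\subset \calD'(\mR^{d+1})$ it suffices to argue in the bigger space. I would apply H\"ormander's Theorem~8.6.7, which asserts that no nonzero distributional solution of $D_pu=\mathbf{0}$ can have its support contained in a closed half-space whose bounding hyperplane is non-characteristic. Applied with normal $\bbn=(0,\ldots,0,1)$ this forces $u=\mathbf{0}$, giving $N_S(p)=\{\mathbf{0}\}$.

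For the `only if' direction, suppose $\deg(p(\mathbf{0},T))<m$, so $\{t=0\}$ is characteristic. H\"ormander's Theorem~8.6.8 then supplies a nonzero $u_0\in \calD'(\mR^{d+1})$ satisfying $D_pu_0=\mathbf{0}$ with $\textrm{supp}(u_0)\subset\{t\geq 0\}$, handling $S=\calD'(\mR^{d+1})$ directly. To cover $S=\textrm{C}^\infty(\mR^{d+1})$, I would convolve $u_0$ against a smooth compactly supported mollifier $\varphi\geq 0$ whose support is contained in $\{t\geq \varepsilon\}$ for small $\varepsilon>0$: the convolution $u_0*\varphi$ is $\textrm{C}^\infty$, satisfies $D_p(u_0*\varphi)=\mathbf{0}$ by translation invariance, has support in $\textrm{supp}(u_0)+\textrm{supp}(\varphi)\subset \{t\geq \varepsilon\}\subset \{t\geq 0\}$, and remains nonzero provided $\varphi$ is chosen close to a shifted Dirac mass (since $u_0*\varphi\to u_0(\cdot-(0,\ldots,0,\varepsilon))$ as the mass of $\varphi$ concentrates at $(0,\ldots,0,\varepsilon)$, and the limit is nonzero because $u_0$ is).

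The main obstacle is matching the hypotheses of the two H\"ormander theorems precisely to the situation: Theorem~8.6.7 must be read as a genuine half-space uniqueness statement (not merely a statement about wave front sets or singular supports), and Theorem~8.6.8 must produce a solution with support inside a full half-space bounded by the characteristic hyperplane, rather than in some smaller characteristic set. Once those two ingredients are in hand, the only non-bookkeeping step is the mollification of $u_0$ for the $\textrm{C}^\infty$ case, where one must deliberately push $\textrm{supp}(\varphi)$ strictly into $\{t>0\}$ (rather than use a standard approximate identity centred at the origin) so as not to destroy the null-past property.
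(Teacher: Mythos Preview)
Your overall strategy matches the paper's exactly: translate the degree condition into the statement that the hyperplane $\{t=0\}$ is (non-)characteristic, then invoke H\"ormander's two theorems for the two directions. However, you have the roles of Theorems~8.6.7 and 8.6.8 reversed. In the paper (and in H\"ormander's book), Theorem~8.6.7 is the \emph{existence} statement: given a characteristic hyperplane with normal $\bbn$, there exists a $\textrm{C}^\infty$ solution whose support is exactly the closed half-space $\{\bby:\langle\bby,\bbn\rangle\leq 0\}$. Theorem~8.6.8 is the \emph{uniqueness} statement about convex open sets $X_1\subset X_2$: a solution vanishing on $X_1$ vanishes on $X_2$ iff every characteristic hyperplane meeting $X_2$ also meets $X_1$. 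The paper therefore uses 8.6.7 for the `only if' direction and 8.6.8 (with $X_1=\{t<0\}$, $X_2=\mR^{d+1}$) for the `if' direction---precisely the opposite labelling from yours.

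Because Theorem~8.6.7 already produces a \emph{smooth} null solution directly, your mollification step in the `only if' part is unnecessary (though your argument there is correct and would salvage the $\textrm{C}^\infty$ case had the existence theorem only delivered a distribution). Once the labels are corrected, your proof and the paper's coincide; the concern you flag in your final paragraph about ``matching the hypotheses precisely'' is resolved simply by reading the two theorems the right way around.
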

\begin{proof} 
We recall that \cite[Theorem~8.6.7]{Hor} says that for a characteristic plane with normal $\bbn$, there exists a solution 
 in $\textrm{C}^\infty$ whose support is exactly the half-space $\{\bby:\langle \bby, \bbn\rangle_{\mR^{d+1}}\leq 0\}$. 
 It can be seen that the hyperplane with the normal vector $\bbn:=(\mathbf{0},1)\in \mR^{d+1}$ 
is characteristic with respect to $D_p$ if and only if $\deg(p)\neq \deg(p(\mathbf{0},T))$. 
 This immediately gives the `only if' part of the proposition. 
 
 For the `if' part, we use \cite[Theorem 8.6.8]{Hor}, which says that if $X_1,X_2$ are open convex sets such that $X_1\subset X_2$, 
 then the following are equivalent:
 
 \smallskip 
 
 \noindent $\;\bullet$  If $u\in \calD'(X_2)$ satisfies $D_p u=\mathbf{0}$ in $X_2$ and $u|_{X_1}=\mathbf{0}$, then $u=\mathbf{0}$ in $X_2$. 
  
  \noindent $\;\bullet$ Every characteristic hyperplane which intersects $X_2$ also intersects $X_1$. 
 
 \smallskip 
 
 \noindent 
Taking $X_1=\{(\bbx,t):\langle (\bbx, t),\bbn\rangle_{\mR^{d+1}}=t<0\}$, where $\bbn:=(\mathbf{0},1)\in \mR^{d+1}$, and with $X_2:=\mR^{d+1}$,  
 the above  yields the `if' part of the proposition.  
\end{proof}

\noindent 
Let us see what happens when we apply Proposition~\ref{prop_Hor} in
the case of the diffusion equation.

\begin{example}[Diffusion equation] Consider the diffusion equation
$$
\Big(\frac{\partial}{\partial t}-\Delta \Big)u=\mathbf{0},
$$
that is, $D_pu=\mathbf{0}$, where $ p(\bbX,T)=T -
(X_1^2+\cdots+ X_d^2)$.  

\smallskip 

\noindent Since $\deg(p(\bbX,T))=2$, whereas
$\deg(p(\mathbf{0},T))=\deg(T)= 1$, Proposition~\ref{prop_Hor} implies
that $N_{\calD'(\mR^{d+1})}(p)\neq \{\mathbf{0}\}$ and
$N_{\textrm{C}^\infty(\mR^{d+1})}(p)\neq \{\mathbf{0}\}$.
\hfill$\Diamond$
\end{example}

\noindent 
In the above example, the outcome is physically unexpected. Indeed,
matter diffusion can be modelled by the PDE above, where $u$
is the density of matter. Then zero density up to time $t=0$
should mean that the density stays zero in the future as well.
However, the above example shows that one can have `pathological' null
solutions in $\textrm{C}^\infty$ or in $\calD'$ that are nonzero in the future. 
On the other hand, if we choose a
different, physically motivated solution space in this context, namely
functions which at each time instant have a spatial profile belonging
to $\textrm{L}^1(\mR^d)$, then the associated null solution space is
trivial, as expected.  The reason that the null solution space is
nontrivial in the above example when $S=\textrm{C}^\infty(\mR^{d+1})$
or $\calD'(\mR^{d+1})$ is that there is no growth restriction on the
spatial profiles of the solutions at each time instant, and `rapid'
growth\footnote{Roughly speaking, faster than $e^{\|\bbx\|^2}$; see
  \cite[Theorem, p.44]{Hel}} is allowed.  Indeed, in most physical
situations, we expect that at each time instant, the spatial profile
is typically in some $\textrm{L}^p$ space or at most
polynomially growing, etc.  This motivates the following solution
space considered in this article. Below $\calS(\mR^d)$ denotes the 
Schwartz space of test functions, and $\calS'(\mR^d)$ denotes the space of tempered 
distributions; see e.g. \cite[Chap. 25]{Tre}.

\goodbreak

\begin{definition}[Distributions tempered in the spatial directions]$\;$

\noindent 
The {\em space of distributions on $\mR^{d+1}$ tempered in the spatial
  directions}, is the space $\calL(\calD(\mR), \calS'(\mR^d))$ of all
continuous linear maps from $\calD(\mR)$ to $\calS'(\mR^d)$, where
$\calD(\mR)$ is endowed with its inductive limit topology and
$\calS'(\mR^d)$ is equipped with the weak dual topology
$\sigma(\calS',\calS)$.  We endow $\calL(\calD(\mR), \calS'(\mR^d))$
with the topology $\calL_\sigma(\calD(\mR),\calS'(\mR^d))$ of
pointwise convergence. 

\smallskip 

\noindent For $u\in \calL(\calD(\mR), \calS'(\mR^d))$, 
 $k=1,\cdots, d$, we define 
 $\displaystyle 
 \frac{\partial u}{\partial x_k} \in \calL(\calD(\mR), \calS'(\mR^d))
 $ 
 by 
 $$
 \left\langle \frac{\partial u}{\partial x_k} (\varphi),\psi\right\rangle:=-\left\langle u(\varphi),  \frac{\partial \psi}{\partial x_k} \right\rangle 
 \quad (\varphi \in \calD(\mR),\;\psi\in \calS(\mR^d)).
 $$
 For $u\in \calL(\calD(\mR), \calS'(\mR^d))$, we define 
  $\displaystyle 
 \frac{\partial u}{\partial t} \in \calL(\calD(\mR), \calS'(\mR^d))
 $ 
 by 
 $$
 \left\langle \frac{\partial u}{\partial t} (\varphi),\psi\right\rangle:=-\left\langle u(\varphi'),  \psi \right\rangle 
 \quad (\varphi \in \calD(\mR),\;\psi\in \calS(\mR^d)).
 $$
 Also, for an element $u\in \calL(\calD(\mR), \calS'(\mR^d))$, we
define its `spatial' Fourier transform by
$\widehat{u}\in \calL(\calD(\mR), \calS'(\mR^d))$ by
$ \langle\widehat{u}(\varphi), \psi\rangle =\langle
u(\varphi),\widehat{\psi}\rangle$
for all $\varphi\in \calD(\mR)$ and $\psi\in \calS(\mR^d)$.  
Here, for
$\psi \in \calS'(\mR^d)$, we define its Fourier transform
$\widehat{\psi}\in \calS'(\mR^d)$ by
$$
\widehat{\psi}(\bxi):=\int_{\mR^d} \psi(\bbx)e^{-i\langle \bxi, \bbx\rangle_{\mR^d}}\textrm{d}^d \bbx\quad (\bxi \in \mR^d),
$$
where $\langle \cdot,\cdot \rangle_{\mR^d}$ is the Euclidean inner
product on $\mR^d$.
\end{definition}  
 
\noindent 
   $\calL(\calD(\mR), \calS'(\mR^d))$ can be
considered to be a subspace of $\calD'(\mR^{d+1})$ as follows. 
 For an
element $u\in \calL(\calD(\mR), \calS'(\mR^d))$, define $U$ by
\begin{equation}
\label{6_December_2019_09:49}
\langle U, \varphi \otimes \psi\rangle = \langle u(\varphi),
\psi\rangle \quad (\varphi \in \calD(\mR), \;\psi \in \calD(\mR^d)\subset \calS(\mR^d)).
\end{equation}
 By the Schwartz kernel theorem \cite[Theorem~5.2.1, p.128]{Hor}, it follows that 
 there is a unique distribution $U\in \calD'(\mR^{d+1})$ such that \eqref{6_December_2019_09:49} is satisfied. The space 
$\calL(\calD(\mR), \calS'(\mR^d))$ is
also isomorphic to the completed projective- (or\footnote{The
  projective tensor product topology and the epsilon tensor product
  topology coincide here, since at least one of the two spaces, and in
  fact in our case, both $\calD'(\mR)$ and $\calS'(\mR^d)$, are
  nuclear.} epsilon-)tensor product
$\calD'(\mR)\widehat{\otimes}_\pi \calS'(\mR^d)$ of the spaces
$\calD'(\mR)$ and $\calS'(\mR^d)$.

We will study the set of null solutions with respect to the space of
distributions tempered in the spatial directions, and give an
algebraic-geometric characterization of those polynomials $p$ for which the
corresponding null solution space consists of just the zero
solution. Before we state our result, we give a few
algebraic-geometric definitions, and some motivation for arriving at
this condition.

\goodbreak

\begin{definition}[Variety]$\;$

\noindent 
Given a set $I$ of polynomials from $\mC[X_1,\cdots,X_d ]$, the {\em
  variety} $V(I)$ of $I$ in $\mC^{d}$, is the set of all common
zeros of the polynomials from $I$, that is,
$V(I)=\{\bxi\in \mC^d : p(\bxi)=0 \textrm{ for all } p\in I\}.  $
\end{definition}

\noindent We make the following two observations, leading  us
to our main result.
\begin{enumerate}
\item Let $p\in \mC[\bbX]$, and let $u\in \calS'(\mR^d)$ be such that
  $ D_p u=\mathbf{0}.  $ Taking Fourier transform, 
  $p(i\bxi) \widehat{u}=\mathbf{0}$.  So
  $
 \textrm{supp}(\widehat{u})\subset \{\bxi \in \mR^d: p(i\bxi)=0\}.
 $ 
 Thus  if $V(p)\cap i\mR^d=\emptyset$, then $\widehat{u}=\mathbf{0}$, and
 hence also $u=\mathbf{0}$.
\item Let $p\in \mC[T]$, and let $u\in \calD'(\mR)$ be such that
  $D_p u=\mathbf{0}$.  Then $u$ is a classical solution,
  expressible as a linear combination of the real analytic functions
  $t^ke^{\lambda t}$ for some nonnegative integers $k$ and complex
  numbers $\lambda$. If the past of $u$ is zero, that is,
  $u|_{t<0}=\mathbf{0}$, then $u=\mathbf{0}$.
\end{enumerate}

\noindent As our solution space
$\calL(\calD(\mR), \calS'(\mR^d))\simeq
\calD'(\mR)\widehat{\otimes}_\pi \calS'(\mR^d)$,
we expect our algebraic-geometric characterisation to reduce to above 
extreme cases when the polynomial belongs either to $\mC[\bbX]$ or to $\mC[T]$.  
In order to formulate this algebraic-geometric condition, we give
the following definition.

\begin{definition}[$\bbX$-content]$\;$

\noindent 
Let $p\in \mC[\bbX,T]$. Writing
 $
p=a_0+a_1 T+\cdots+ a_n T^n \in \mC[\bbX][T] ,
$ 
where $a_0,\cdots, a_n\in \mC[\bbX]$, the {\em $\bbX$-content 
$C_{\bbX}(p)$ of $p$} is the ideal in $\mC[\bbX]$ generated by
$a_0,\cdots, a_n$.
\end{definition}

\noindent 
We show below that if the variety $V(C_{\bbX}(p) )$ of the
$\bbX$-content of $p$ meets $i\mR^d$, then the null solution space in
$\calL(\calD(\mR), \calS'(\mR^d))$ associated with $p$ is nontrivial.

\begin{theorem}
\label{main_result_1}$\;$

\noindent 
Let $p\in \mC[\bbX,T]$.  If
$N_{\calL(\calD(\mR), \calS'(\mR^d))}(p)=\{\mathbf{0}\}$, then
$ V( C_{\bbX}(p) ) \cap i \mR^d= \emptyset$.
\end{theorem}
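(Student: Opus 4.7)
I plan to prove the contrapositive: assuming $V(C_\bbX(p))\cap i\mR^d \neq \emptyset$, I will exhibit a nonzero element of $N_{\calL(\calD(\mR),\calS'(\mR^d))}(p)$. Fix $\bxi_0\in \mR^d$ with $i\bxi_0\in V(C_\bbX(p))$; writing $p=\sum_{k=0}^n a_k(\bbX)T^k$ with $a_k\in \mC[\bbX]$, this amounts to $a_k(i\bxi_0)=0$ for every $k$, so that $p(i\bxi_0,T)$ vanishes identically as a polynomial in $T$. The candidate null solution is morally the ``plane wave times Heaviside step'' $u(\bbx,t)=H(t)\,e^{i\langle\bxi_0,\bbx\rangle_{\mR^d}}$, which I would formalize inside $\calL(\calD(\mR),\calS'(\mR^d))$ as
$$
u(\varphi):=\Big(\int_0^\infty \varphi(t)\,\mathrm{d}t\Big)\cdot e_{\bxi_0},\qquad \varphi\in \calD(\mR),
$$
where $e_{\bxi_0}\in \calS'(\mR^d)$ denotes the bounded function $\bbx\mapsto e^{i\langle\bxi_0,\bbx\rangle_{\mR^d}}$. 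Continuity of $\varphi\mapsto\int_0^\infty\varphi$ on $\calD(\mR)$ combined with continuity of scalar multiplication on $\calS'(\mR^d)$ places $u$ in $\calL(\calD(\mR),\calS'(\mR^d))$.

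Next I would verify the three defining properties. Any $\varphi$ supported in $(-\infty,0)$ gives $u(\varphi)=\mathbf{0}$, so $u|_{t<0}=\mathbf{0}$; any $\varphi$ supported in $(0,\infty)$ with $\int_0^\infty\varphi\neq 0$ gives $u(\varphi)\neq \mathbf{0}$, so $u$ is nonzero. For the PDE, an integration by parts against $\psi\in \calS(\mR^d)$ in the definition of $\partial/\partial x_k$ provided in the text yields the eigen-type relation $\partial u/\partial x_k= i(\bxi_0)_k\,u$ for $k=1,\ldots,d$; since each time derivative $\partial^j u/\partial t^j$ remains a scalar (depending on $\varphi$) times $e_{\bxi_0}$ in its spatial factor, one obtains
$$
D_p u=\sum_{k=0}^n a_k\!\Big(\tfrac{\partial}{\partial x_1},\ldots,\tfrac{\partial}{\partial x_d}\Big)\frac{\partial^k u}{\partial t^k}=\sum_{k=0}^n a_k(i\bxi_0)\frac{\partial^k u}{\partial t^k}=\mathbf{0}.
$$

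No substantive obstacle is anticipated; the heart of the matter is simply that on the separated tensor $H(t)\otimes e_{\bxi_0}(\bbx)$ the operator $D_p$ acts in the spatial variable as the scalar-coefficient operator $p(i\bxi_0,\partial/\partial t)$, so vanishing of $p(i\bxi_0,\cdot)$ automatically produces a null solution (in fact $H$ could be replaced by any $v\in\calD'(\mR)$ vanishing on $t<0$). The only genuine care required is to interpret the construction correctly inside the space $\calL(\calD(\mR),\calS'(\mR^d))$ and to keep the bookkeeping straight when commuting the spatial and temporal derivatives as defined in the paper.
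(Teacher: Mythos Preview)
Your proposal is correct and follows essentially the same route as the paper: both prove the contrapositive by constructing a separated solution $v(t)\otimes e^{i\langle\bxi_0,\bbx\rangle_{\mR^d}}$ at a point $\bxi_0$ with $i\bxi_0\in V(C_\bbX(p))$, exploiting that $a_k(i\bxi_0)=0$ for all $k$ makes \emph{any} temporal factor with zero past work. The only cosmetic difference is that the paper takes $v=\Theta\in\textrm{C}^\infty(\mR)$ (the standard smooth bump $e^{-1/t}$ for $t>0$) rather than your Heaviside $H$, a choice which---as you yourself observe---is immaterial here.
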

\begin{proof} Let 
  $V( C_{\bbX}(p) )\cap i \mR^d \neq \emptyset$, and $\bxi_0\in \mR^d$ be such that $ i \bxi_0 \in V(C_{\bbX}(p))$.
  Consider $u:=e^{i\langle \bbx,\bxi_0\rangle_{\mR^d}}\otimes \Theta$,
  where  $\Theta$ is any nonzero function in
  $\textrm{C}^\infty(\mR)$ which has a zero past, for example,
\begin{equation}
\label{eqn_3_Dec_2019_15:55}
\Theta(t)=\left\{\begin{array}{ll}
0 & \textrm{if } t\leq 0,\\
e^{-1/t} & \textrm{if } t> 0.
\end{array}
\right.
\end{equation}
Then $u\in \calL(\calD(\mR), \calS'(\mR^d))$ and it has zero past,
that is, $u|_{t<0}=\mathbf{0}$. If
 $
p=a_0 + a_1 T +\cdots+ a_n T^n,
$ 
where $a_0, \cdots, a_n\in \mC[\bbX]$, then
$a_0,\cdots, a_n\in C_{\bbX}(p)$, and so
 $
a_0(i\bxi_0)=\cdots= a_n(i\bxi_0)=0.
$ 

\noindent 
Consequently,
 $
D_p u= a_0(i\bxi_0) e^{i\langle \bbx, \bxi_0\rangle_{\mR^d} }
\otimes \Theta + \cdots +a_n(i\bxi_0) e^{i\langle \bbx, \bxi_0
  \rangle_{\mR^d}} \otimes \Theta^{(n)} =\mathbf{0}.
  $ 
Hence $u\in N_{ \calL(\calD(\mR), \calS'(\mR^d))}(p)$. But
$u\neq \mathbf{0}$, and so
$N_{ \calL(\calD(\mR), \calS'(\mR))}(p)\neq \{\mathbf{0}\}$.
\end{proof}

\noindent In light of the necessity of
$V( C_{\bbX}(p) ) \cap i \mR^d= \emptyset$ for $N_{\calL(\calD(\mR), \calS'(\mR^d))}(p)=\{\mathbf{0}\}$, a
natural question is whether this condition is also sufficient.  Our
main result (Theorem~\ref{main_result_3}) is to show the sufficiency. 
Thus, Theorems~\ref{main_result_1} and \ref{main_result_3} together give:

\begin{theorem}$\;$

\noindent 
Let $p\in \mC[\bbX,T]$.  
$N_{\calL(\calD(\mR), \calS'(\mR^d))}(p)=\{\mathbf{0}\}$ if and only if 
$ V( C_{\bbX}(p) ) \cap i \mR^d= \emptyset$.
 
\end{theorem}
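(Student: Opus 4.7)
\emph{Plan.} Given Theorem~\ref{main_result_1}, only sufficiency remains: assuming $V(C_\bbX(p)) \cap i\mR^d = \emptyset$, the goal is to show that any $u \in \calL(\calD(\mR), \calS'(\mR^d))$ with $D_p u = \mathbf{0}$ and $u|_{t<0} = \mathbf{0}$ must be zero. Writing $p = \sum_{k=0}^n a_k(\bbX) T^k$, the hypothesis says that for each $\bxi \in \mR^d$ at least one of $a_0(i\bxi),\ldots,a_n(i\bxi)$ is nonzero, so $p(i\bxi,T) \in \mC[T]$ is a nonzero polynomial in $T$ at every $\bxi \in \mR^d$.

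I would first pass to the spatial Fourier side. Since spatial Fourier transform replaces each $\partial/\partial x_k$ by multiplication by $i\xi_k$, the equation $D_p u=\mathbf{0}$ becomes $\sum_k a_k(i\bxi)\,\partial_t^k \widehat u = \mathbf{0}$, and the zero-past condition is preserved. Via the Schwartz kernel theorem $\widehat u$ may be identified with a distribution $\widehat U \in \calD'(\mR^{d+1})$ tempered in $\bxi$, and the goal becomes $\widehat U = \mathbf{0}$. The guiding principle is the classical ODE fact recalled in observation~(2) of the introduction: for each fixed $\bxi \in \mR^d$, any $\calD'(\mR)$-solution of the fiber equation $p(i\bxi,\partial_t) w = \mathbf{0}$ is an exponential polynomial, and such a solution with zero past vanishes. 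The task is therefore to lift this pointwise-in-$\bxi$ statement to the distributional identity $\widehat U = \mathbf{0}$.

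To perform the lifting, I would localize in $\bxi$ using the finite open cover $V_k := \{\bxi \in \mR^d : a_k(i\bxi)\neq 0\}$, $k=0,\ldots,n$, of $\mR^d$, and choose a smooth partition of unity $\{\chi_\alpha\}$ subordinate to a locally finite refinement, with each $\chi_\alpha \in \calD(\mR^d)$ supported inside some $V_{k(\alpha)}$. Multiplication by $\chi_\alpha(\bxi)$ commutes with every $a_j(i\bxi)$ and with $\partial_t$, so each $\chi_\alpha \widehat U$ satisfies the same PDE and still has zero past, and $\widehat U = \sum_\alpha \chi_\alpha \widehat U$; it suffices therefore to show each $\chi_\alpha \widehat U$ vanishes. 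On the compact $\bxi$-support of $\chi_\alpha \widehat U$ the polynomial $a_{k(\alpha)}(i\bxi)$ is bounded away from $0$, so the PDE may be divided by it to yield an equation in which the coefficient of $\partial_t^{k(\alpha)}$ equals $1$ and all other coefficients are smooth in $\bxi$. When $a_n$ is nonvanishing on the patch (so $k(\alpha)=n$), this is a $\bxi$-parametrized monic ODE of Kowalevski type for which standard distributional Cauchy uniqueness, combined with the fact that zero past forces zero distributional initial data at $t=0$, gives $\chi_\alpha \widehat U = \mathbf{0}$.

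The main obstacle is the patches on which one is \emph{forced} to take $k(\alpha) < n$, i.e., on which $a_n(i\bxi)$ itself vanishes. There the divided equation still contains higher-order terms $(a_j/a_{k(\alpha)})(i\bxi)\,\partial_t^j v$ for $j > k(\alpha)$, the operator fails to be elliptic in the $t$-direction, and a direct Cauchy reduction is unavailable. I would address this by a secondary stratification: the zero set of $a_n$ in $\mR^d$ is a proper real-algebraic subvariety, off which the monic-in-$\partial_t^n$ argument applies; on it, one further stratifies by the vanishing of $a_{n-1}$, and so on, inducting on $n - k(\alpha)$. Controlling how the tempered distribution $\widehat U$ can concentrate on each such successively smaller subvariety, and showing at every inductive step that the contribution supported on the current stratum is annihilated by the residual fiber ODE and the zero-past condition, constitutes the technical heart of the proof.
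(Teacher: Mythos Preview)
Your opening moves are the same as the paper's: pass to the spatial Fourier transform, and use Holmgren-type uniqueness to kill $\widehat U$ on the open set $\{a_n(i\bxi)\neq 0\}$, so that $\textrm{supp}(\widehat U)\subset \{a_n(i\bxi)=0\}\times[0,\infty)$. You also correctly locate the difficulty: understanding a distribution supported on this subvariety.

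The gap is in your proposed attack on that difficulty. Your stratification $\{a_n=0\}\supset\{a_n=a_{n-1}=0\}\supset\cdots$ and the induction on $n-k(\alpha)$ do not do what you need, because multiplication by $a_n(i\bxi)$ does \emph{not} annihilate a distribution supported on $\{a_n(i\bxi)=0\}$. Already in one variable, $\xi\cdot\delta_0^{(j)}=-j\,\delta_0^{(j-1)}\neq 0$; so on the stratum $\{a_n=0\}\cap\{a_{n-1}\neq 0\}$ you still have the full equation $\sum_k a_k(i\bxi)\partial_t^k\widehat U=0$, not a reduced one of order $n-1$, and there is no lower-order operator to which you could re-apply Holmgren. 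Knowing that $a_{n-1}$ is nonvanishing on this stratum gives you no leverage by itself.

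What the paper does instead is stratify the variety $\{a_n(i\bxi)=0\}$ \emph{geometrically}, via the \L ojasiewicz structure theorem, into real analytic submanifolds $M_{d-1}\cup\cdots\cup M_0$ of decreasing dimension. On each piece $M_{d'}$ it invokes the Schwartz structure theorem for distributions supported on a submanifold, writing $\widehat U$ locally as $\sum_{|\bbk|\leq K}\partial_{\bsigma}^{\bbk}T_{\bbk}$ with $T_{\bbk}$ living on $M_{d'}\times\mR$. Testing against monomials $\bsigma^{\bbk}$ in the normal coordinates isolates the top-order layer $T_{\bbk}$ and yields for it an equation $\sum_\ell c_\ell(\bxi(\btau,\mathbf 0))\partial_t^\ell T_{\bbk}=0$ with real-analytic coefficients in $d'<d$ spatial variables---and here the hypothesis $V(C_{\bbX}(p))\cap i\mR^d=\emptyset$ guarantees these coefficients have no common zero. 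The induction is therefore on the \emph{spatial dimension} $d$, not on the order defect $n-k(\alpha)$. Your proposal does not mention either structural tool, and without the Schwartz decomposition there is no mechanism by which the ``residual fiber ODE'' you refer to actually emerges from the original PDE on the stratum.
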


\noindent In the last section, we will also consider distributions 
which have spatial profiles at each time instant lying in certain Besov spaces. 

\medskip 

\noindent 
We summarise the results in a table below:

\medskip 

\begin{center}
 \begin{tabular}{|c|c|c|c|}\hline
      & $\phantom{\displaystyle\sum}\!\!\!\!\!\!\!$ Solution space $S\!\!\!\!\!\!\!\phantom{\displaystyle\sum}$ & Test on $p$ for $N_S(p)=\{\mathbf{0}\}$ & Result reference \\ \hline \hline 
  (1) & $\phantom{\displaystyle\sum}\!\!\!\!\!\!\!$ $\textrm{C}^\infty (\mR^{d+1})\!\!\!\!\!\!\!\phantom{\displaystyle\sum}$ & $\deg(p)=\deg(p(\mathbf{0},T))$ & Proposition~\ref{prop_Hor} \\\hline 
  (2) & $\phantom{\displaystyle\sum}\!\!\!\!\!\!\!$ $\calD'(\mR^{d+1})\!\!\!\!\!\!\!\phantom{\displaystyle\sum}$ & $\deg(p)=\deg(p(\mathbf{0},T))$ & Proposition~\ref{prop_Hor} \\\hline 
  (3) & $\phantom{\displaystyle\sum}\!\!\!\!\!\!\!$ $\calD(\mR^{d+1})\!\!\!\!\!\!\!\phantom{\displaystyle\sum}$ & $p\neq \mathbf{0}$ & Proposition~\ref{prop_a}  \\ \hline 
  (4) & $\phantom{\displaystyle\sum}\!\!\!\!\!\!\!$ $\calE'(\mR^{d+1})\!\!\!\!\!\!\!\phantom{\displaystyle\sum}$ & $p\neq \mathbf{0}$ & Proposition~\ref{prop_a}  \\ \hline 
  (5) & $\phantom{\displaystyle\sum}\!\!\!\!\!\!\!$ $\calL(\calD(\mR),\calS'(\mR^d))\!\!\!\!\!\!\!\phantom{\displaystyle\sum}$ & $V(C_{\bbX}(p))\cap i\mR^d=\emptyset $ & 
  Theorems~\ref{main_result_1}, \ref{main_result_3} \\ \hline 
  (6) & $\phantom{\displaystyle\sum}\!\!\!\!\!\!\!$ $\calL(\calD(\mR),B_{p,q}(\mR^d))\!\!\!\!\!\!\!\phantom{\displaystyle\sum}$ & $p\neq \mathbf{0} $ & Theorem~\ref{main_result_sob} \\ \hline
  (7) & $\phantom{\displaystyle\sum}\!\!\!\!\!\!\!$ $\calL(\calD(\mR), H_s(\mR^d))\!\!\!\!\!\!\!\phantom{\displaystyle\sum}$ & $p\neq \mathbf{0}$ & Corollary~\ref{cor_sob} \\ \hline 
  (8) & $\phantom{\displaystyle\sum}\!\!\!\!\!\!\!$ $\calL(\calD(\mR), \calS(\mR^d))\!\!\!\!\!\!\!\phantom{\displaystyle\sum}$ & $p\neq \mathbf{0}$ & Corollary~\ref{cor_sob} \\ \hline 
  (9)& $\phantom{\displaystyle\sum}\!\!\!\!\!\!\!$ $\calL(\calD(\mR),\calE'(\mR^d))\!\!\!\!\!\!\!\phantom{\displaystyle\sum}$ & $p\neq \mathbf{0}$ & Theorem~\ref{theorem_last_one} \\ \hline 
 (10)& $\phantom{\displaystyle\sum}\!\!\!\!\!\!\!$ $\calD'_{\mA}(\mR^{d+1})\!\!\!\!\!\!\!\phantom{\displaystyle\sum}$ & $\forall\mathbf{v}\in A^{-1}2\pi\mZ^d$, 
 $  \exists t\in \mC$:  & Theorem~\ref{theorem_lalalast_one} \\ 
 & & $p( i \mathbf{v}, t) \neq 0.$ & \\ \hline 
 
 \end{tabular}
\end{center}

\medskip 

\noindent
The key idea used in proving the sufficiency part is as follows. 
By taking Fourier transform, the partial derivatives $\partial_{x_k}$ with
respect to the spatial variables $x_k$ are converted into $i\xi_k$, and so we obtain $p(i\bxi,\partial_t)\widehat{u}=\mathbf{0}$, 
a family, parameterised by $\bxi\in \mR^d$, of equations involving $\partial_t^k$ with the polynomial
coefficients $a_k( i \bxi)$. One would like to `freeze' a $\bxi\in \mR^d$, to get
an ODE for $(\widehat{u}(\cdot))(\bxi) \in \calD'(\mR)$, where for such a solution to an ODE we can indeed say that zero past implies  
zero future, and so the proof can be completed easily by varying the arbitrarily fixed $\bxi$. This is possible if the  
spatial Fourier transform is a function, so that the evaluation at $\bxi$ is allowed, and this is essentially how one shows the results (6)-(10). 

For showing our main result (5), where spatial Fourier transform will not result in a function of $\bxi$, but rather a distribution, 
the idea is as follows. Using Holmgren's uniqueness principle, the support of $\widehat{u}$ 
is contained in $V\times [0,\infty)$, where $V$ is the real zero set of the leading coefficient $a_n$. 
If $d=1$, so that $a_n$ were a polynomial of just one variable, then the real zeroes are isolated points, and we can complete the proof using 
a structure theorem of Schwartz, which says that distributions supported on a line must 
be essentially the Dirac delta and its derivatives, tensored with distributions $T_k$ of one variable (time). 
We can then boil down $p(i\bxi, \partial_t) \widehat{u}=\mathbf{0}$ to give an ODE for these distributions $T_k$ of time, and 
since each $T_k$ can be shown to have zero past, we can conclude that the $T_k$s must be zero. 
So this is how the proof works when $d=1$ and when $a_n$ was a polynomial of just one variable. In the general case, 
 to handle the case when $a_n$ may be a polynomial of $d$ variables, we proceed inductively on the number of spatial dimensions $d$. It is too much to hope that 
at each inductive step we end up with polynomials as coefficients of $\partial_t^k$, 
since polynomial parametrisations of the zero sets of the polynomial $a_n$ may not be possible (e.g. $\{(X,Y): X^2+Y^2-1=0\}$ does 
not possess a polynomial parametrisation). But the $d=1$ case just relied on the discreteness of the zero set of $a_n$, which is 
 also guaranteed if $a_n$ were real analytic instead of being a polynomial. 
 So to carry out the induction, we use the set up where we make sure that the 
 coefficients of $\partial_t^k$ obtained at each inductive step are 
 real analytic functions. To begin with, polynomials are real analytic, real analytic varieties do 
possess locally real analytic parametrisations (\L ojaciewicz structure theorem for real analytic varieties), 
and composition of real analytic functions is real analytic. 
This allows us to complete the induction step, by again appealing to a structure theorem of Schwartz, now 
for distributions with support in a smooth manifold. 
 The technical details are carried out in Lemma~\ref{main_lemma}. 
 
The organisation of the article is as follows.
\begin{itemize}
\item In Section~\ref{Section_preliminaries}, we recall some
  preliminaries needed for the proofs.
\item In Section~\ref{Section_technical_lemma}, we will prove the
  central technical result in Lemma~\ref{main_lemma}, which will lead
  to the proof of Theorem~\ref{main_result_3} on the sufficiency.
\item In Section~\ref{final_section}, we will prove
  Theorem~\ref{main_result_3}.
\item In Section~\ref{section_final}, we   consider distributions 
which have spatial profiles at each time instant lying in certain Besov spaces.
\item In Section~\ref{section_finalists_final} we consider distributions that are periodic in the spatial directions. 
\item Finally, in the last section, we mention a class of auxiliary open problems on the theme of null solutions. 
\end{itemize}
  
\section{Preliminaries}
\label{Section_preliminaries}

\noindent In this section, we recall three auxiliary known results
needed for the proof of Lemma~\ref{main_lemma}:

\smallskip 

$\bullet$  Holmgren's uniqueness theorem,

$\bullet$ Schwartz structure theorem for distributions supported on a
  manifold,

$\bullet$ \L ojaciewicz structure theorem for real analytic varieties.

\subsection{Holmgren's uniqueness theorem}
 
\noindent Before recalling this result, we first
we establish some terminology and notation.  Let $\Omega\subset \mR^d$ be
an open set, and consider the differential operator $P$ with real
analytic coefficients $a_{\mathbf{n}}$ of order $N$:
$$
P=P(x,{\bm{\partial}})=\sum_{|\mathbf{n}|\leq N} a_n(\mathbf{x})
\frac{\partial^{n_1}}{\partial x_1^{n_1}}\cdots
\frac{\partial^{n_d}}{\partial x_d^{n_d}}.
$$
Here, for a multi-index $\mathbf{n}=(n_1,\cdots, n_d)$ of nonnegative
integers, we define $|\mathbf{n}|:= n_1+\cdots+n_d$.  
We now recall the following version of the uniqueness theorem of Holmgren \cite[Lemma~5.3.2,
p.125]{HorII}:
  
  \goodbreak

\begin{proposition}[Holmgren's uniqueness theorem]$\;$
 
\noindent In an open subset  $\Omega$ of $\mR^d$, let  
$$
P(\bbx,{\bm{ \partial}})=\sum_{|\mathbf{n}|\leq N} a_n(\mathbf{x})
\frac{\partial^{n_1}}{\partial x_1^{n_1}}\cdots
\frac{\partial^{n_d}}{\partial x_d^{n_d}}
$$ 
be a differential operator having coefficients real analytic on $\Omega$.

\smallskip 

\noindent Assume that 
the coefficient of 
 $\displaystyle 
\frac{\partial^N}{\partial x_d^N} 
$
never vanishes in $\Omega$. 

\smallskip 

\noindent 
If $u\in \calD'(\Omega)$ and $P(\bbx,{\bm{ \partial}})u=0$ in  
$\Omega_c:=\{\bbx\in \Omega: x_d<c\}$ for some $c$, then $u=\mathbf{0}$ in $\Omega_c$ provided that 
$\Omega_c\cap (\textrm{\em supp}(u))$ is relatively compact in $\Omega$.
\end{proposition}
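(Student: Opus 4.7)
The plan is to follow the classical duality-plus-Cauchy--Kovalevskaya strategy. Because the coefficient of $\partial^N/\partial x_d^N$ is nowhere zero on $\Omega$, every hyperplane $\{x_d = c'\}\cap \Omega$ is non-characteristic for $P$, and hence also for its formal transpose $P^t$ (whose principal symbol agrees with that of $P$ up to sign). This is exactly the hypothesis under which Cauchy--Kovalevskaya yields analytic solvability of $P^t v = f$ with prescribed analytic Cauchy data on such a hyperplane; it will be the engine of the whole argument.

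Set $K := \overline{\Omega_c \cap \operatorname{supp}(u)}$, which is compact in $\Omega$ by hypothesis, and define
\[
c_* := \sup\{c' \leq c :  u|_{\Omega_{c'}} = \mathbf{0}\}.
\]
For $c'<\inf_{x\in K} x_d$ the slab $\Omega_{c'}$ misses $\operatorname{supp}(u)$, so $c_*$ is well defined and finite. The goal becomes $c_* = c$. Suppose for contradiction $c_* < c$; then by a partition-of-unity argument it suffices to show that $\langle u, \varphi\rangle = 0$ for every $\varphi\in \mathcal{D}(\Omega_c)$ whose support lies in a sufficiently small neighborhood of a chosen point $x^{(0)}\in K$ with $x^{(0)}_d = c_*$.

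To establish this, I would approximate $\varphi$ uniformly (in $C^\infty$-norm on a fixed compact set) by polynomials $\varphi_n$ via Weierstrass. Non-characteristicity lets Cauchy--Kovalevskaya produce analytic functions $v_n$ solving $P^t v_n = \varphi_n$ with zero Cauchy data on a convex real-analytic hypersurface $S$ that coincides with $\{x_d = c_*\}$ outside a small neighborhood of $x^{(0)}$ and bulges slightly into $\{x_d > c_*\}$; the common domain of analyticity can be arranged to be a lens $L$ containing the supports of the $\varphi_n$. Since $u$ vanishes strictly below $S$ (by the defining property of $c_*$ combined with the chosen bulge), a standard cutoff legitimizes the pairing, and formal integration by parts gives $\langle u,\varphi_n\rangle = \langle P u, v_n\rangle = 0$. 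Passing to the limit $n\to\infty$ yields $\langle u,\varphi\rangle=0$, contradicting the definition of $c_*$. The main obstacle is the geometric one: arranging the deformed hypersurface $S$ and the Cauchy--Kovalevskaya radius of convergence to depend only on bounds for the coefficients of $P^t$ on a compact neighborhood of $K$, so that a single lens $L$ covers the relevant piece of $\operatorname{supp}(u)$. This is precisely where one uses the openness of the non-characteristic condition together with a quantitative (parametric) form of Cauchy--Kovalevskaya; the execution is standard and is the content of the proof in the cited reference.
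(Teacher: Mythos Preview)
The paper does not give its own proof of this proposition: it is quoted as a known result from H\"ormander \cite[Lemma~5.3.2]{HorII} and used as a black box. Your outline is precisely the classical duality/Cauchy--Kovalevskaya argument that H\"ormander employs, so in spirit you are aligned with the cited source.

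There is, however, a geometric slip in your sketch. Under the stated hypothesis $\Omega_c\cap\operatorname{supp}(u)$ is already relatively compact in $\Omega$, so no deformed hypersurface is needed: once $c_*$ is defined, one solves $P^t v_n=\varphi_n$ with zero Cauchy data on a \emph{flat} hyperplane $\{x_d=c'\}$ with $c_*<c'<c$, obtains the $v_n$ on a uniform slab $\{c'-\epsilon_0<x_d<c'\}$, and runs the cutoff/integration-by-parts there; iterating (with $\epsilon_0$ bounded below by compactness) pushes $c_*$ down. The bulged surface is the device that reduces the \emph{general} Holmgren theorem to the present compact-support formulation, not part of this lemma's proof. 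More importantly, the direction you wrote is inconsistent: if $S$ bulges into $\{x_d>c_*\}$, then ``strictly below $S$'' near $x^{(0)}$ includes points with $x_d\in[c_*,\text{height of }S)$, and you do \emph{not} yet know $u$ vanishes there---that is exactly what you are trying to establish. Dropping the deformation (or reversing the bulge so that $S\subset\{x_d\le c_*\}$) fixes this.
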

 
\noindent We will use the following consequence of this. 

\goodbreak 

\begin{lemma}
\label{lemma_Holmgren}$\;$

\noindent 
Let 

\medskip 

\noindent $\;\bullet$ 
$U$ be an open subset of $\mR^d$, 
  $\phantom{\displaystyle \frac{\partial^n}{t^n}}$

\noindent $\;\bullet$ $c_0,\cdots, c_N$ are real analytic functions of $d$ variables
  in $U$,$\phantom{\displaystyle \frac{\partial^n}{\partial t^n}}$

\noindent $\;\bullet$ $u\in \calD'(U\times \mR)$,
  $\phantom{\displaystyle \frac{\partial^n}{\partial t^n}}$

\noindent $\;\bullet$ 
  $u|_{t<0}=\mathbf{0}$,$\phantom{\displaystyle
    \frac{\partial^n}{\partial t^n}}$

\noindent $\;\bullet$ 
  $\displaystyle c_0(\bxi)u+c_1(\bxi)\frac{\partial u}{\partial
    t}+\cdots+ c_N(\bxi)\frac{\partial^N u}{\partial t^N}=\mathbf{0}$.

\medskip 
    
\noindent Then $\textrm{\em supp}(u)\subset \{(\bxi,t) \in U\times \mR:c_N(\bxi)=0\}$. 
\end{lemma}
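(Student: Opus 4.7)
The plan is to reduce to a pointwise application of Holmgren's uniqueness theorem recalled in Section~\ref{Section_preliminaries}, propagating in the $t$-direction. The non-characteristic condition on the hyperplanes of constant $t$ for the operator at hand is precisely $c_N(\bxi)\neq 0$, which is exactly why the claimed support condition arises.

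To argue this in detail, fix any $(\bxi_0,t_0)\in U\times\mR$ with $c_N(\bxi_0)\neq 0$; the goal is to show that $u$ vanishes on a neighborhood of this point. Since $c_N$ is real analytic, hence continuous, and $U$ is open, there is an open ball $B\subset U$ centered at $\bxi_0$ with $\overline{B}\subset U$ and $c_N\neq 0$ throughout $\overline{B}$. Pick $\chi\in \calD(B)$ with $\chi\equiv 1$ on a neighborhood $W$ of $\bxi_0$, and set $v:=\chi\cdot u\in \calD'(B\times\mR)$. The crucial observation is that the operator
$$
P:=c_0(\bxi)+c_1(\bxi)\frac{\partial}{\partial t}+\cdots+c_N(\bxi)\frac{\partial^N}{\partial t^N}
$$
contains only $t$-derivatives, so multiplication by $\chi(\bxi)$ commutes with $P$. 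Therefore $Pv=\chi\cdot(Pu)=\mathbf{0}$ on $B\times\mR$, and clearly $v|_{t<0}=\mathbf{0}$.

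Now apply Holmgren's theorem with $\Omega:=B\times\mR$ (taking the role of $x_d$ to be $t$) and any $c>t_0$: the coefficients of $P$ are real analytic on $\Omega$, the leading coefficient $c_N(\bxi)$ does not vanish on $\Omega$, and $\textrm{supp}(v)\cap\Omega_c\subset \textrm{supp}(\chi)\times[0,c]$ is compact in $\mR^{d+1}$ and contained in $\Omega$, so the required relative compactness is met. Holmgren's theorem then yields $v=\mathbf{0}$ on $B\times(-\infty,c)$, and since $\chi\equiv 1$ on $W$ we deduce $u=\mathbf{0}$ on $W\times(-\infty,c)$, a neighborhood of $(\bxi_0,t_0)$. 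Varying $(\bxi_0,t_0)$ over the open set where $c_N$ does not vanish concludes the proof.

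The one step requiring care is the cutoff; its legitimacy rests entirely on the absence of $\bxi$-derivatives in $P$, so no commutator term appears when multiplying by $\chi(\bxi)$ and the equation $Pv=\mathbf{0}$ survives intact. Everything else — real analyticity of the coefficients on $\Omega$, non-vanishing of the leading coefficient on $\overline{B}$, and the vanishing of $v$ for $t<0$ that turns the support into a compact subset of the slab — lines up directly with the hypotheses of the cited form of Holmgren, so no further obstacle is anticipated.
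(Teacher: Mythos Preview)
Your proof is correct and follows essentially the same approach as the paper: localize to a spatial region where $c_N\neq 0$ and apply Holmgren's uniqueness theorem in the $t$-direction. The only minor difference is how the relative compactness hypothesis is arranged: the paper uses a two-radii trick (working in $B(\bxi_0,2r)\times\mR$ but restricting attention to $B(\bxi_0,r)$), whereas you multiply by a spatial cutoff $\chi(\bxi)$, which is legitimate precisely because $P$ contains no $\bxi$-derivatives and hence commutes with $\chi$. Both devices achieve the same end.
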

\begin{proof} Let $\bxi_0\in U$ be such that $c_N(\bxi_0)\neq 0$. 
Let $r>0$ be such that the open ball $B(\bxi_0, 2r)\subset U$ and $c_N(\bxi)\neq 0$ in $B(\bxi_0,2r)$. 
 We will use Holmgren's uniqueness theorem with $\Omega_{\bxi_0}:=B(\bxi_0,2r) \times \mR$ and the differential operator
$$
P((\bxi,t),\bm{\partial})
:=
c_0(\bxi)
+c_1(\bxi)\frac{\partial}{\partial t}
+\cdots
+c_N(\bxi)\frac{\partial^N}{\partial t^N}.
$$
The coefficient $c_N(\bxi)$ of $ \displaystyle \frac{\partial^N}{\partial t^N}$ never vanishes in $\Omega_{\bxi_0}$. 

\smallskip 

\noindent Let $\widetilde{u}$ be the restriction of $u$ to $\Omega_{\bxi_0}=B(\bxi_0,2r)\times \mR\subset U\times \mR$. 
We already know that $\textrm{supp} (\widetilde{u})\subset B(\bxi_0,2r)\times [0,\infty)$ since $\widetilde{u}|_{t<0}=\mathbf{0}$ 
(which in turn follows from $u|_{t<0}=\mathbf{0}$). For any $c>0$, with $\Omega_c:=\{(\bxi,t):\bxi \in B(\bxi_0,r) \textrm{ and }t<c\}$, 
we have $\Omega_c \cap \textrm{supp}(\widetilde{u})$ is relatively compact in $\Omega_{\bxi_0}$. Hence $\widetilde{u}=\mathbf{0}$ in $\Omega_c$. 
As the choice of $c>0$ was arbitrary, it follows that $\widetilde{u}=\mathbf{0}$ in $B(\bxi_0,r)\times \mR$. By varying the $\bxi_0$ having the property that $c_N(\bxi_0)\neq 0$, 
we obtain that the restriction of $u$ to the set $V:=\{\bxi \in U: c_N(\bxi)\neq 0\}\times \mR$ is the zero distribution $\mathbf{0}\in \calD'(V)$. 
So  $\textrm{supp}(u)\subset \{(\bxi ,t)\in U\times\mR : c_N(\bxi)=0\}$. 
\end{proof}

\subsection{Schwartz structure theorem for distributions with support
  in a submanifold of $\mR^d$}
\label{Schwa}

We will need a local structure result, due to Schwartz
\cite[Theorem~XXXVII, page 102]{Sch}, for distributions with support
contained in a smooth submanifold of $\mR^d$ (analogous to the
well-known structure theorem saying that a distribution with support in
a point is a linear combination of the Dirac delta distribution at
that point and its derivatives). But before stating this result, it is
useful to keep the following guiding example in mind.

Consider in $\mR^d$ the manifold $M=S^{d-1}$, namely the unit
$(d-1)$-dimensional sphere. Let $\partial_r$ denote the radial partial
derivative. Then we expect that a distribution $T$ in $\mR^d$ having
its support in $S^{d-1}$ should be decomposable as
$$
T=\sum_{k=0}^K \partial_r^k T_k,
$$
for some distributions $T_k$ on $S^{d-1}$, where the action of the
right-hand side above on a test function
$\varphi=\varphi(r,\theta_1,\cdots, \theta_{d-1})$ (in an appropriate
chart) is understood as
 $$\displaystyle 
\sum_{k=0}^K (-1)^k \langle
T_k, \partial_r^k\varphi(r,\cdot)|_{r=1}\rangle.
$$
\noindent 
A generalisation of this is given below; see
\cite[Theorem~XXXVII, p. 102]{Sch}. Here, 
for a multi-index $\bbk=(k_{d'+1},\cdots, k_d)$ of nonnegative
integers, we define $|\bbk|=k_{d'+1}+\cdots+k_d$, and
$$
\partial_\bby^\bbk =\left(\frac{\partial}{\partial
    y_{d'+1}}\right)^{k_{d'+1}}\cdots \left(\frac{\partial}{\partial
    y_{d}}\right)^{k_{d}}.
$$
 
\begin{proposition}[Schwartz structure theorem]
$\;$
 
\noindent
Suppose that 

\medskip 

\noindent $\;\;\bullet\;$ $M$ is a submanifold of $\mR^d$ of dimension $d'$, 

\noindent $\;\;\bullet\;$ $\bxi_0\in M$, and 

\noindent $\;\; \bullet\;$ $\bby$ are  coordinates in
$B(\bxi_0,R)=\{\bxi\in \mR^d: \|\bxi-\bxi_0\|_2<R\}$ in $\mR^d$, 

\noindent \phantom{aal} such
that
$ B(\bxi_0,R)\cap M=\{\bxi\in B(\bxi_0,R): y_{d'+1}(\bxi)=\cdots =
y_{d}(\bxi)=0\}.  $

\medskip 

\noindent 
Then a distribution $T$ on $\mR^d$ with support in $M$ can be locally
decomposed as
$$
T=\sum_{|\bbk|\leq K} \partial_\bby^\bbk T_\bbk,
$$
for some distributions $T_\bbk$ on $M$. 
\end{proposition}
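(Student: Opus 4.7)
The plan is to first reduce to the case where $M$ is flat. Since the statement is local, I work in the chart provided by the coordinates $\bby$; by the invariance of distributions under diffeomorphisms, pulling back $T$ reduces the problem to showing that if $T\in \calD'(B(\bxi_0,R))$ is supported in the affine slice $\{\bby''=\mathbf{0}\}$, where I split $\bby=(\bby',\bby'')$ with $\bby'=(y_1,\ldots,y_{d'})$ tangential and $\bby''=(y_{d'+1},\ldots,y_d)$ transverse, then $T=\sum_{|\bbk|\le K}\partial_\bby^{\bbk} T_\bbk$ for some distributions $T_\bbk$ on the tangential slice.

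After shrinking $R$ if necessary, $T$ has finite order, say $K$, because every distribution is locally of finite order. The crucial lemma to establish is then: any test function $\psi\in \calD(B(\bxi_0,R))$ satisfying $\partial_{\bby}^{\bbk}\psi(\bby',\mathbf{0})=0$ for every $|\bbk|\le K$ and every $\bby'$ is annihilated by $T$. To prove this, introduce a cutoff $\chi_\varepsilon(\bby''):=\chi(\bby''/\varepsilon)$, with $\chi$ a bump function equal to $1$ near $\mathbf{0}$. Since $\textrm{supp}(T)\subset\{\bby''=\mathbf{0}\}$, one has $\langle T,\psi\rangle=\langle T,\chi_\varepsilon\psi\rangle$. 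The Taylor remainder for $\psi$ in the transverse direction combined with the Leibniz rule gives $|\partial^{\bbm}(\chi_\varepsilon\psi)|\lesssim \varepsilon^{K+1-|\bbm|}$ for $|\bbm|\le K$ on the support of $\chi_\varepsilon$, so applying the order-$K$ seminorm that controls $T$ yields $|\langle T,\chi_\varepsilon\psi\rangle|\lesssim\varepsilon$, and letting $\varepsilon\to 0$ closes the argument.

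With this vanishing lemma in hand, Taylor expand any test function $\varphi$ to order $K$ in $\bby''$ about $\bby''=\mathbf{0}$:
$$
\varphi(\bby',\bby'')=\sum_{|\bbk|\le K}\frac{(\bby'')^{\bbk}}{\bbk!}(\partial_\bby^{\bbk}\varphi)(\bby',\mathbf{0})+R_K(\bby',\bby''),
$$
with the remainder vanishing on $M$ to order $K+1$ in $\bby''$ and thus killed by the lemma. Defining $T_\bbk$ on $M\cap B(\bxi_0,R)$ by $\langle T_\bbk,\phi\rangle:=\frac{(-1)^{|\bbk|}}{\bbk!}\langle T,\rho(\bby'')(\bby'')^{\bbk}\widetilde{\phi}(\bby')\rangle$, where $\widetilde{\phi}$ is any $\bby''$-independent extension of $\phi$ and $\rho$ is any cutoff identically $1$ near $\bby''=\mathbf{0}$ (well-definedness following again from the lemma), one checks by unwinding $\langle \partial_\bby^\bbk T_\bbk,\varphi\rangle = (-1)^{|\bbk|}\langle T_\bbk,\partial_\bby^\bbk\varphi\rangle$ that the identity $T=\sum_{|\bbk|\le K}\partial_\bby^{\bbk}T_\bbk$ holds. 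The principal technical obstacle is the vanishing lemma: balancing the negative powers of $\varepsilon$ generated by derivatives of $\chi_\varepsilon$ against the order of vanishing of $\psi$ on $M$ needs careful, though entirely standard, bookkeeping.
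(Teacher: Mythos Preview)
The paper does not prove this proposition: it is quoted as a known result from Schwartz \cite[Theorem~XXXVII, p.~102]{Sch}, so there is no ``paper's own proof'' to compare against. Your argument is essentially the classical proof one finds in Schwartz or in H\"ormander: reduce to the flat model via the chart, use local finite order to establish the vanishing lemma by the cutoff-and-Taylor-remainder estimate, and then read off the layer distributions $T_{\bbk}$ from the transverse Taylor coefficients. The bookkeeping in your vanishing lemma is correct (the worst case $|\bbm|=K$ still leaves a factor $\varepsilon$, and the supports stay in a fixed compact set), and your definition of $T_{\bbk}$ together with the check that $T=\sum_{|\bbk|\le K}\partial_{\bby}^{\bbk}T_{\bbk}$ unwinds exactly as you indicate once one interprets $\langle \partial_{\bby}^{\bbk}T_{\bbk},\varphi\rangle:=(-1)^{|\bbk|}\langle T_{\bbk},(\partial_{\bby}^{\bbk}\varphi)|_{M}\rangle$, which is the convention the paper adopts in its motivating $S^{d-1}$ example.
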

 
\subsection{\L ojasiewicz structure theorem for real analytic varieties}
\label{Loja}

\noindent We use the terminology and notation from \cite{KraPar}.  Let
$U\subset \mR^d$ be open. Then $\textrm{C}^\omega(U)$ denotes the
commutative ring (with respect to pointwise addition and
multiplication) of all (possibly complex-valued) real analytic
functions in $d$ real variables; see \cite[Definition~1.1.5,
p.3]{KraPar}.  In order to prove our main result in the form of the
technical result, namely Lemma~\ref{main_lemma}, we will need a
 structure theorem for real analytic varieties given in Lemma~\ref{baby_Loja} below.  
 Roughly speaking, this result says
that the zero set of a real analytic function of $d$-variables admits
a decomposition\footnote{This is analogous to a result due to Whitney
  \cite{Whi}, saying that a real algebraic variety $V$ can always be
  decomposed into a union of `algebraic partial manifolds' of
  decreasing dimensions: For example, if $V$ is the curve
  $V=\{(x,y)\in \mR^2: x^2-y^3=0\}$ (with a `cusp' at the origin),
  then we have $V=M_1\cup M_2$, where $M_1$ is the curve minus the
  origin, and $M_2=\{(0,0)\}$.} into the union of real analytic
manifolds of various dimensions $\leq d-1$. Lemma~\ref{baby_Loja} 
is a consequence of a more elaborate structure theorem for real analytic varieties 
 due to S. \L ojaciewicz
\cite{Loj} (see also \cite{BM}), which we first recall below. We quote this result
after recalling a few pertinent definitions.

A function $H(x_1,\cdots, x_{d-1}; x_d)$ of $d$ real variables is
called a {\em distinguished polynomial} if it has the form
\begin{eqnarray*}
H(x_1,\cdots, x_{d-1}; x_d)
&=&x_d^m+A_1(x_1,\cdots, x_{d-1})x_d^{m-1}+\cdots\\
&& + A_{m-1}(x_1,\cdots,x_{d-1})x_d+ A_m(x_1,\cdots, x_{d-1}),
\end{eqnarray*}
where each analytic $A_\ell$ vanishes at
$(x_1,\cdots, x_{d-1})=\mathbf{0}\in \mR^{d-1}$.

Since the polynomial
$$
\prod_{1\leq i<j\leq m} (X_i-X_j)^2\in \mZ[X_1,\cdots, X_m]
$$
is symmetric, the theorem on symmetric polynomials \cite[p.24]{Loj}
implies the existence of a unique polynomial
$\Delta_m\in \mZ[Y_1,\cdots, Y_m]$ such that
$$
\prod_{1\leq i<j\leq m} (X_i-X_j)^2=\Delta_m(\sigma_1,\cdots, \sigma_m),
$$
where 
$$
\sigma_i=(-1)^i \sum_{\nu_1<\cdots <\nu_i} X_{\nu_1}\cdots X_{\nu_i},
\quad i=1,\cdots, m.
$$
Let $R$ be a commutative unital ring. 
The {\em discriminant} of a monic polynomial 
 $
p=X^m+a_{m-1}X^{m-1}+\cdots+ a_m\in R[X],
$ 
 is defined to be $\Delta_m(a_1,\cdots, a_m)\in R$.

 \smallskip 
 
 \noindent 
We recall \cite[Theorem~6.3.3, p.168]{KraPar} below. 

\begin{proposition}[\L ojaciewicz structure theorem] $\;$

\noindent 
Let $f(x_1,\cdots, x_d)$ be a real analytic function in a
neighbourhood of the origin. We may assume that
$f(0,\cdots, 0, x_d)\nequiv 0$.  After a rotation of the coordinates
$x_1,\cdots, x_{d-1}$, if needed, there exist $\delta_j>0$,
$j=1,\cdots, d$, and distinguished polynomials
$ H_\ell^k(x_1,\cdots, x_k; x_\ell) \quad (0\leq k\leq d-1,\; k+1\leq
\ell\leq d) $
defined on $Q_k:=\{|x_j|<\delta_j, \;1\leq j\leq k\}$ such that the
discriminant $\Delta_\ell^k$ of $H_\ell^k$ does not vanish on $Q_k$
and the following properties are satisfied:
 
\smallskip 
 
\noindent {\em (1)} Each root $\zeta $ of
$H_\ell^k(x_1,\cdots,x_k;\cdot)$ on $Q_k$ satisfies
$|\zeta|<\delta_\ell$.
  
\smallskip 
  
\noindent {\em (2)} The set
$V:=\{\bbx=(x_1,\cdots, x_d):\forall j\; |x_j|<\delta_j\textrm{ and }
f(\bbx)=0\}$
has a decomposition $ V=M_{d-1}\cup \cdots \cup M_0.  $ The set $M_0$
is either empty or consists of the origin alone. For
$1\leq k\leq d-1$, we may write $M_k$ as a finite, disjoint union
$$
M_k=\bigcup_{\chi} \Gamma_\chi^k
$$
of $k$-dimensional sub-real analytic varieties having the following
description:
  
\smallskip 
  
\noindent $($\textrm{\em Real analytic parametrisation}$)$ Each $\Gamma_\chi^k$ is
defined by $d-k$ equations
\begin{eqnarray*}
    x_{k+1}= \chi_{\eta_{k+1}^k}(x_1,\cdots,x_k),
    &\cdots, &
    x_d= \chi_{\eta_{d}^k}(x_1,\cdots,x_k),
\end{eqnarray*}
where each $\chi_{\eta_{\ell}^k}$ is real analytic on an open subset
$\Omega_\chi^k\subset Q_k\subset \mR^k$,
$$
H_\ell^k(x_1,\cdots, x_k; \chi_{\eta^k_\ell})\equiv 0,
$$ 
and $\Delta_\ell^k\neq 0$ for all $(x_1,\cdots,x_k)\in \Omega_\chi^k$,
$\ell=k+1,\cdots, d$.
   
 \smallskip
%
%
    
\noindent $($\textrm{\em Stratification}$)$ For each $k$, the closure of $M_k$
contains all the subsequent $M_j$: that is,
$Q\cap M_k\supset M_{k-1}\cup \cdots \cup M_0$.
\end{proposition}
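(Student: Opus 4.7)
The plan is to proceed by induction on the ambient dimension $d$, with the structural backbone being the Weierstrass preparation theorem applied at each stage. The base case $d=1$ is essentially immediate: a real analytic function of one variable that is not identically zero has only isolated zeros, so after shrinking the domain either $V$ is empty or $V=\{0\}=M_0$, with no $M_k$ for $k\geq 1$.

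For the inductive step, assume the theorem holds in dimension $d-1$. Since $f(0,\ldots,0,x_d)\not\equiv 0$, the Weierstrass preparation theorem for real analytic functions lets one write $f(x_1,\ldots,x_d)=u(x_1,\ldots,x_d)\cdot H_d^{d-1}(x_1,\ldots,x_{d-1};x_d)$ on a polydisc around the origin, where $u$ is a non-vanishing real analytic unit and $H_d^{d-1}$ is a distinguished polynomial of some degree $m$ in $x_d$. Thus locally the zero set of $f$ coincides with that of $H_d^{d-1}$. Next, consider the discriminant $\Delta_d^{d-1}(x_1,\ldots,x_{d-1})$ of $H_d^{d-1}$ viewed as a polynomial in $x_d$; this is a real analytic function on $Q_{d-1}$. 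On the open set $\{\Delta_d^{d-1}\neq 0\}$, the polynomial $H_d^{d-1}(x_1,\ldots,x_{d-1};\cdot)$ has only simple roots, and by the real analytic implicit function theorem each root extends to a real analytic function $\chi$ on each connected component $\Omega_\chi^{d-1}$. These graphs, taken together, provide the parametrization of the top-dimensional stratum $M_{d-1}$.

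The lower-dimensional strata $M_{d-2},\ldots,M_0$ are obtained by applying the inductive hypothesis to the zero set of the discriminant $\{\Delta_d^{d-1}=0\}\subset Q_{d-1}$, after a further rotation of $x_1,\ldots,x_{d-1}$ if needed. A second round of Weierstrass preparation produces distinguished polynomials $H_\ell^{d-2}$ in $x_{d-1}$ with coefficients analytic in $x_1,\ldots,x_{d-2}$, and their discriminants $\Delta_\ell^{d-2}$ control the next stage; iterating the construction assembles the full system of $H_\ell^k$ and analytic parametrizations $\chi_{\eta_\ell^k}$ by successive applications of the real analytic implicit function theorem. The stratum dimensions strictly decrease at each step, because passing to the zero set of a nontrivial real analytic function drops dimension by at least one.

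The main obstacle, and the most delicate part of the argument, is establishing the stratification property $\overline{M_k}\supset M_{k-1}\cup\cdots\cup M_0$. This requires uniform control over how the simple roots of the distinguished polynomials degenerate as one approaches the discriminant locus: roughly, two simple roots on the generic side must coalesce into a multiple root on the discriminant, so the closure of a parametrization graph meets the lower-dimensional stratum. Verifying this in the real analytic (as opposed to complex analytic) setting, where the number of real roots of $H_\ell^k(\bbx;\cdot)$ can jump across $\{\Delta_\ell^k=0\}$, is the point where one genuinely needs the deeper analysis of \L ojasiewicz rather than only Weierstrass preparation plus the implicit function theorem; the finite dimension-stratifying combinatorics of which components of $\{\Delta_\ell^k\neq 0\}$ attach to which lower strata must be bookkept inductively.
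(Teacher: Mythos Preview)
The paper does not prove this proposition at all: it is quoted verbatim as a known result from Krantz and Parks \cite[Theorem~6.3.3, p.168]{KraPar}, and the only argument the paper supplies is the short deduction of the weaker Lemma~\ref{baby_Loja} from it. So there is no ``paper's own proof'' to compare your attempt against.

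That said, your sketch is a fair outline of the classical route to the \L ojasiewicz structure theorem: Weierstrass preparation to replace $f$ by a distinguished polynomial in $x_d$, the implicit function theorem on the complement of the discriminant locus to get the top stratum $M_{d-1}$ as a finite union of analytic graphs, and then induction on the discriminant to build the lower strata. You are also right to flag the stratification inclusion $\overline{M_k}\supset M_{k-1}\cup\cdots\cup M_0$ as the genuinely hard part in the real setting; your proposal is honest in conceding that this step needs more than preparation plus implicit function. One point worth tightening: the inductive hypothesis is applied not just to the zero set of $\Delta_d^{d-1}$ in $Q_{d-1}$, but one must also track how the fibres of $H_d^{d-1}$ over that lower-dimensional variety contribute pieces of $V$ sitting above it; your sketch gestures at this with ``a second round of Weierstrass preparation produces distinguished polynomials $H_\ell^{d-2}$'' but does not spell out how the $x_d$-coordinate is recovered over the lower strata. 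In any case, for the purposes of this paper the proposition is simply imported from the literature, and only the consequence in Lemma~\ref{baby_Loja} is used.
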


\noindent This result is stronger than what we need. We will only require the 
decomposition into lower dimensional real analytic varieties and the local analytic parametrisation. 
We state this as the following corollary of the above. 

\begin{lemma}
\label{baby_Loja}
Let $f:\mathbb{R}^d\rightarrow \mathbb{R}$ be a real analytic function, 
and 
$$
V:=\{\mathbf{x}\in \mathbb{R}^d: f(\mathbf{x}) =0\}
$$
be its variety. Then for each point $\mathbf{x}_0$ of $V$, there exists a neighbourhood  
 $\Omega$ such that there exists a decomposition $V\cap \Omega =M_{d-1}\cup \cdots \cup M_0$, 
where some of the $M_{d'}$s may be empty, and where the $M_{d'}$ are a 
real analytic varieties which are analytic submanifolds of $\mathbb{R}^d$ of dimension $d'$, 
admitting real analytic parametrisations as follows: 
For every $\mathbf{x}_0\in M_{d'}$, there exists a neighbourhood $U$ 
of $\mathbf{x}_0$ in $\mathbb{R}^d$ and a neighbourhood $W$ of 
$\mathbf{0}\in \mathbb{R}^d$, with a homeomorphism 
$\varphi:W\rightarrow U$ which is real analytic, 
$$
\mathbb{R}^{d'}\times \mathbb{R}^{d-d'}\supset W \owns (\btau,\bsigma)\mapsto 
\varphi(\btau,\bsigma)\in U,
$$
such that $M_{d'}\cap U=\{\varphi(\btau,\mathbf{0}): \btau\in \mathbb{R}^{d'} \text{ such that }(\btau,\mathbf{0})\in W\}$.
\end{lemma}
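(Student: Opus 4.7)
The plan is to read Lemma~\ref{baby_Loja} directly off the Łojasiewicz structure theorem quoted just above. First I would translate $\bbx_0$ to $\mathbf{0}$, exclude the uninteresting case in which $f$ vanishes identically near $\mathbf{0}$ (for which the claimed decomposition into pieces of dimension $\le d-1$ is vacuous), and apply a generic orthogonal change of coordinates so that $f(0,\ldots,0,x_d)\not\equiv 0$. Łojasiewicz's theorem then supplies a polydisc $Q=\{|x_j|<\delta_j:1\le j\le d\}$ together with a decomposition $V\cap Q=M_{d-1}\cup\cdots\cup M_0$, and I would set $\Omega:=Q$.

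For the submanifold claim, I would observe that each stratum $\Gamma_\chi^{d'}$ produced by Łojasiewicz is, by construction, the graph over an open subset $\Omega_\chi^{d'}\subset\mR^{d'}$ of the real analytic maps $\chi_{d'+1},\ldots,\chi_d$, hence is a $d'$-dimensional real analytic submanifold of $\mR^d$. A finite disjoint union of equidimensional submanifolds is again a submanifold of the same dimension, so $M_{d'}$ is a $d'$-dimensional real analytic submanifold; the exceptional case $d'=0$ is trivial since $M_0$ is either empty or the singleton $\{\mathbf{0}\}$.

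For the analytic chart $\varphi$ at a point $\bbx_*\in M_{d'}$, I would exploit that $\bbx_*$ lies in a unique stratum $\Gamma_\chi^{d'}$, so $\bbx_*=(\bbt_*,\chi_{d'+1}(\bbt_*),\ldots,\chi_d(\bbt_*))$ for some $\bbt_*\in\Omega_\chi^{d'}$, and define
$$
\varphi(\btau,\bsigma)\;:=\;\big(\bbt_*+\btau,\;\chi_{d'+1}(\bbt_*+\btau)+\sigma_1,\;\ldots,\;\chi_d(\bbt_*+\btau)+\sigma_{d-d'}\big)
$$
on a small open neighbourhood $W=W'\times W''$ of the origin in $\mR^{d'}\times\mR^{d-d'}$. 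This map is real analytic (because the $\chi_\ell$ are), its Jacobian is block upper-triangular with identity diagonal blocks and is therefore invertible throughout $W$, so $\varphi$ is a real analytic diffeomorphism from $W$ onto $U:=\varphi(W)$ and in particular a homeomorphism. The graph characterisation of $\Gamma_\chi^{d'}$ yields $\Gamma_\chi^{d'}\cap U=\{\varphi(\btau,\mathbf{0}):(\btau,\mathbf{0})\in W\}$, and by shrinking $W$ further I can ensure that $U$ meets no other stratum of $M_{d'}$, delivering the required identity $M_{d'}\cap U=\{\varphi(\btau,\mathbf{0}):(\btau,\mathbf{0})\in W\}$. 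The case $d'=0$ collapses to $\varphi(\bsigma):=\bsigma$.

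No substantive obstacle arises: the whole argument is a repackaging of Łojasiewicz's theorem. The only subtle step is the final shrinking of $W$ to exclude the remaining $d'$-dimensional strata of $M_{d'}$ from $U$, and this is immediate from the finiteness of the disjoint union defining $M_{d'}$ together with the local closedness of each individual stratum.
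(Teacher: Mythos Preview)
Your proposal is correct and follows essentially the same route as the paper: both read the decomposition and the local chart directly off the \L ojasiewicz structure theorem, building $\varphi$ as the graph map $(\btau,\bsigma)\mapsto(\btau+\bbt_*,\chi(\btau+\bbt_*)+\bsigma)$ and checking invertibility via the block-triangular Jacobian. You are in fact somewhat more careful than the paper about the preliminary normalisations (translation, exclusion of the identically-zero case, shrinking $U$ to avoid the other strata), and your Jacobian is block \emph{lower}-triangular rather than upper-triangular, but this is immaterial.
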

\begin{proof} This follows immediately from the above result, since (2) guarantees the 
local decomposition, and the real analytic parametrisation, namely 
$$
(x_1,\cdots, x_k)\mapsto (x_1, \cdots, x_k, \chi_{\eta_{k+1}^k}(x_1,\cdots,x_k),
    \cdots, 
    \chi_{\eta_{d}^k}(x_1,\cdots,x_k)) 
$$
corresponds to the one needed in the statement of the lemma if we take 
$\btau= (x_1,\cdots, x_k)$, $\bsigma=(x_{k+1},\cdots, x_d)$,  
and 
$$
\bxi(\btau,\bsigma)=(\btau,  \chi_{\eta_{k+1}^k}(x_1,\cdots,x_k)-x_{k+1},
    \cdots, 
    \chi_{\eta_{d}^k}(x_1,\cdots,x_k)-x_d).
$$
Then we note that the differential of  $\bxi$ has the form 
$$
d\bxi=\left[\begin{array}{cccc} I_{d'} & \mathbf{0} \\ 
             \mathbf{\ast} & -I_{d-d'} \end{array}\right],
$$
which is clearly invertible. Here $I_k$ denotes the $k\times k$ identity matrix with ones on the diagonal and zeroes elsewhere. 
\end{proof}

\goodbreak 

\section{The main technical lemma}
\label{Section_technical_lemma}

\noindent In this section, we will show the main technical result in
Lemma~\ref{main_lemma}, which will enable us to show our result on
sufficiency, namely Theorem~\ref{main_result_3}.

\begin{lemma}
\label{main_lemma}
Let 
\begin{enumerate} 
\item $U\subset \mR^d$ be open, 
  $\phantom{\displaystyle \frac{\partial^n}{\partial t^n}}$
\item
  $\mathbf{0}\neq p=c_0(\bxi)+c_1(\bxi)T+\cdots+c_n(\bxi) T^n \in
  \textrm{\em C}^\omega(U) [T]$, $c_n\neq \mathbf{0}$, 
  $\phantom{\displaystyle \frac{\partial^n}{\partial t^n}}$
\item $V(c_0,c_1,\cdots, c_n)\cap U =\emptyset$,
  $\phantom{\displaystyle\frac{\partial^n}{\partial t^n} }$
\item
  $w\in \calD'(U\times
  \mR)$,$\phantom{\displaystyle\frac{\partial^n}{\partial t^n} }$
\item
  $w|_{t<0}=\mathbf{0}$,$\phantom{\displaystyle
    \frac{\partial^n}{\partial t^n} }\displaystyle$
\item
\label{eqn_6_Dec_1_11:21}
  $\displaystyle c_0(\bxi)+c_1(\bxi) \frac{\partial}{\partial
    t}U+\cdots+ c_n(\bxi)\frac{\partial^n}{\partial t^n}
  w=\mathbf{0}$.
\end{enumerate}
Then $w=\mathbf{0}$.
\end{lemma}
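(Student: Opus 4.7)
The plan is to apply Lemma \ref{lemma_Holmgren} to localize the support of $w$ to $V\times[0,\infty)$ where $V:=\{c_n=0\}\cap U$, and then to peel off this support stratum by stratum using the real-analytic stratification of Lemma \ref{baby_Loja}, inducting on the spatial dimension $d$. Indeed, Lemma \ref{lemma_Holmgren} applied to hypothesis \eqref{eqn_6_Dec_1_11:21} gives $\textrm{supp}(w)\subseteq V\times\mR$, which combined with the zero past yields the above localization. In particular $w$ already vanishes in a neighbourhood of any $(\bxi_0,t_0)$ with $\bxi_0\notin V$; and by the hypothesis $V(c_0,\ldots,c_n)\cap U=\emptyset$, at every $\bxi\in V$ at least one $c_j$ (necessarily with $j<n$) is nonzero.

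I would induct on $d$. The base case $d=0$ is the classical constant-coefficient ODE statement: $p(\partial_t)w=\mathbf{0}$ with $c_n\ne 0$ and zero past forces $w=\mathbf{0}$. For the inductive step, fix any $\bxi_0\in V$ and apply Lemma \ref{baby_Loja} to $c_n$ to obtain a neighbourhood $\Omega$ of $\bxi_0$ with a decomposition $V\cap\Omega=M_{d-1}\cup\cdots\cup M_0$, each $M_{d'}$ being a real-analytic $d'$-submanifold admitting local real-analytic parametrizations $\varphi$. I would then show $w|_{\Omega\times\mR}=\mathbf{0}$ by reverse induction on the stratum dimension $k$: once $w$ is known to vanish on the complement of $(M_0\cup\cdots\cup M_k)\times\mR$ inside $\Omega\times\mR$, on the open set $(\Omega\setminus(M_0\cup\cdots\cup M_{k-1}))\times\mR$ the support of $w$ lies in $M_k\times[0,\infty)$.

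Near any $\bxi_1\in M_k$, introduce local coordinates $(\btau,\bsigma)\in\mR^k\times\mR^{d-k}$ with $M_k=\{\bsigma=\mathbf{0}\}$. By the Schwartz structure theorem, locally
$$
w=\sum_{|\mathbf{k}|\leq K}\partial_\bsigma^\mathbf{k}W_\mathbf{k},
$$
where each $W_\mathbf{k}$ is a distribution on an open subset of $\mR^k\times\mR_t$ inheriting the zero past. Substituting into \eqref{eqn_6_Dec_1_11:21}, Taylor-expanding each smooth $c_j(\btau,\bsigma)$ in $\bsigma$ about $\bsigma=\mathbf{0}$ up to order $K$, and using the standard multiplication identity $\bsigma^\mathbf{m}\partial_\bsigma^\mathbf{k}W_\mathbf{k}=(-1)^{|\mathbf{m}|}\frac{\mathbf{k}!}{(\mathbf{k}-\mathbf{m})!}\partial_\bsigma^{\mathbf{k}-\mathbf{m}}W_\mathbf{k}$ for $\mathbf{m}\le\mathbf{k}$ and zero otherwise, I regroup the left-hand side as $\sum_\mathbf{n}\partial_\bsigma^\mathbf{n}Z_\mathbf{n}=\mathbf{0}$. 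Uniqueness of the Schwartz representation in the fixed transverse coordinates forces $Z_\mathbf{n}=\mathbf{0}$ for every $\mathbf{n}$; at a multi-index $\mathbf{n}$ of top order $|\mathbf{n}|=K$ only the Taylor coefficient of order $\mathbf{0}$ of each $c_j$ survives, yielding
$$
\sum_{j=0}^n\tilde c_j(\btau)\,\partial_t^j W_\mathbf{n}=\mathbf{0},\qquad \tilde c_j:=c_j\circ\varphi.
$$
Compositions of real-analytic functions are real analytic, and the no-common-zero hypothesis transfers to the $\tilde c_j$, so after discarding any identically-vanishing $\tilde c_j$ and relabelling the highest surviving index as the new leading coefficient, the inductive hypothesis in dimension $k<d$ applies and gives $W_\mathbf{n}=\mathbf{0}$ for all $|\mathbf{n}|=K$. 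A further descending induction on $|\mathbf{n}|$ now kills every $W_\mathbf{n}$, since once $W_\mathbf{k}=\mathbf{0}$ for all $|\mathbf{k}|>|\mathbf{n}|$, the equation $Z_\mathbf{n}=\mathbf{0}$ collapses to the same form $\sum_j\tilde c_j(\btau)\partial_t^j W_\mathbf{n}=\mathbf{0}$, so $w=\mathbf{0}$ near $\bxi_1$, closing the reverse induction and hence the outer induction on $d$.

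The main obstacle, and the reason Lemma \ref{baby_Loja} (\L ojaciewicz structure) rather than a merely smooth stratification is invoked, is preserving the real-analytic character of the coefficients at each inductive call: a $\textrm{C}^\infty$ parametrization of the strata would destroy the hypotheses required by the next inductive step, whereas the analytic parametrizations supplied by Lemma \ref{baby_Loja} ensure that $c_j\circ\varphi$ remains real analytic, so the pulled-back polynomial again satisfies the hypotheses of the present lemma in one lower spatial dimension. A secondary care-point is the bookkeeping at the uniqueness step, where one must isolate the top-order equation cleanly by carefully tracking which Taylor coefficients of $c_j$ contribute at a given multi-index $\mathbf{n}$.
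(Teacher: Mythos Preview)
Your proposal is correct and follows essentially the same route as the paper: Holmgren to localize the support to $\{c_n=0\}\times[0,\infty)$, the \L ojasiewicz stratification of $\{c_n=0\}$, the Schwartz structure theorem on each stratum, and the observation that the pulled-back coefficients $\tilde c_j=c_j\circ\varphi$ remain real analytic so that the induction on $d$ goes through. The only cosmetic differences are that you take $d=0$ as the base case (the paper takes $d=1$ and handles $M_0$ separately at the end), you make the reverse induction on stratum dimension explicit (the paper leaves it implicit), and you kill all the $W_{\mathbf{n}}$ by descending induction on $|\mathbf{n}|$ rather than by the paper's contradiction at the top order---all of which arguably makes your write-up slightly cleaner, but none of which changes the underlying argument.
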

\begin{proof} We prove this inductively on the number of spatial
  dimensions $d$.

\smallskip 

\noindent {\bf Step 1.} 
Let $d=1$. Holmgren's uniqueness theorem (Lemma~\ref{lemma_Holmgren}) implies 
$$
\textrm{supp}(w)\subset \{(\xi,t)\in U\times \mR : c_n (  \xi)=0,\;t\geq 0\}.
$$
If $c_n$ is constant (which must necessarily be $\neq 0$,
since $c_n$ was nonzero), then $w=\mathbf{0}$, and we are done.

Let $c_n$ be not a constant. Suppose that $w\neq \mathbf{0}$. Let $(\xi_k)_{k\in \mN}$ be the
real zeros of $c_n$ in $U$. Then each $\xi_k$ is isolated in $U$. We
have that
$$
\textrm{supp}(w)\subset \bigcup_{k\in \mN}\; \{\xi_k\}\times
[0,+\infty).
$$
Each of the half lines above carries a solution of the differential
equation 
$$
c_0(\bxi)+c_1(\bxi) \frac{\partial}{\partial
    t}U+\cdots+ c_n(\bxi)\frac{\partial^n}{\partial t^n}
  w=\mathbf{0},
$$
and $w$ is a sum of these.

Let $T\in (0,\infty)$. Take a $\xi_*\in \{\xi_1,\xi_2,\cdots\}$, $U$ a
neighbourhood of $\xi_*$ not containing the other $\xi_k$s, and an
$\alpha\in \calD(\mR)$ which is identically $1$ in a neighbourhood of
 $ [-T,T]$ such that the distribution $\alpha w\in \calD'(U\times \mR)$
is nonzero. Then $\alpha w$ has compact support, and by the structure
theorem for distributions (e.g. \cite[Theorem~2.3.5, p.47]{Hor} or the
result from Subsection~\ref{Schwa}), it follows that there exist
distributions $T_0,\cdots, T_K\in \calD'(\mR)$ (`of the time
variable'), with $T_K\neq \mathbf{0}$, such that
$$
\alpha w=\sum_{k=0}^K \Big(\Big( \frac{\partial}{\partial \xi}\Big)^k
\delta_{\xi_*} \Big) \otimes T_k.
$$
Here $\delta_{\xi_*}$ is the Dirac delta of the spatial variable
$\xi$, supported at $\xi_*$.  From the above, it can be shown that
also
\begin{equation}
\label{structure}
w=\sum_{k=0}^K \Big(\Big( \frac{\partial}{\partial \xi}\Big)^k \delta_{\xi_*} \Big) \otimes T_k
\end{equation}
in the strip $U\times  (-T,T)$. 

We claim that $T_K|_{(-T,0)}=\mathbf{0}$. For if not, then there is a
$\varphi\in \calD(\mR)$ with support in $ (-T,0)$ such that
$\langle T_K,\varphi\rangle \neq 0$. Hence the sum
$$
\sum_{k=0}^K \langle T_k ,\varphi \rangle \Big(
\frac{\partial}{\partial \xi}\Big)^k \delta_{\xi_*}
$$
is a nonzero distribution in $\calD'(U)$. Otherwise, we get the
contradiction that $\delta_{\xi_*} , \cdots,\delta_{\xi_*}^{(K)} $ are
linearly dependent in $\calD'(U)$. So there must exist a
$\psi\in \calD(U)$ such that
$$
\left\langle \sum_{k=0}^K \langle T_k ,\varphi \rangle \Big(
  \frac{\partial}{\partial \xi}\Big)^k \delta_{\xi_*},\;\psi
\right\rangle \neq 0,
$$
that is, $ \langle w, \psi\otimes \varphi \rangle \neq 0$. But the
support of $\psi\otimes \varphi$ is in $U\times (-T,0)$, and so we
have arrived at a contradiction to $w|_{t<0}=\mathbf{0}$. This proves
$T_K|_{(-T,0)}=\mathbf{0}$.

Using 
$$
\mathbf{0}=\sum_{\ell=0}^n c_\ell (\xi) \Big( \frac{\partial}{\partial
  t}\Big)^\ell w,
$$
we have that for $(\xi-\xi_*)^K\in \textrm{C}^\infty(U)$ and
$\varphi\in \calD(\mR)$, that
\begin{eqnarray}
\nonumber \!\!\!\!\!\!
0&\!\!\!=&\!\!\! 
\left\langle \sum_{\ell=0}^n c_\ell (\xi) 
\Big( \frac{\partial}{\partial t}\Big)^\ell\sum_{k=0}^K 
\Big( \frac{\partial}{\partial \xi}\Big)^k\delta_{\xi_*} \otimes T_k,
\;(\xi-\xi_*)^K \otimes \varphi \right\rangle
\\
\label{equation_15_nov_10:42}
&\!\!\!=&\!\!\! 
\sum_{\ell=0}^n \sum_{k=0}^K \left\langle \Big( 
\frac{\partial}{\partial t}\Big)^\ell T_k, 
\varphi \right\rangle (-1)^k \left\langle \delta_{\xi_*}, 
\Big( \frac{\partial}{\partial \xi}\Big)^k
\left(c_\ell(\xi)(\xi-\xi_*)^K\right)\right\rangle. 
\end{eqnarray}
But by the Leibniz product rule, 
$$
\Big( \frac{\partial}{\partial
  \xi}\Big)^k\left(c_\ell(\xi)(\xi-\xi_*)^K\right) = \sum_{m=0}^k
\mybinom{k}{m} \left((\xi-\xi_*)^K\right)^{(m)} \Big(
\frac{\partial}{\partial \xi}\Big)^{k-m} c_\ell(\xi) ,
$$
and if $k<K$, then for all $m=0,\cdots, k$, the $m$th derivative of
$(\xi-\xi_*)^K$ will be zero at $\xi=\xi_*$ as $K-m\geq K-k \geq
1$. So for $k<K$,
$$
\Big( \frac{\partial}{\partial
  \xi}\Big)^k\left(c_\ell(\xi)(\xi-\xi_*)^K\right)\Big|_{\xi=\xi_*} =
0.
$$
Hence the sum over $k=0,\cdots, K$ in \eqref{equation_15_nov_10:42}
collapses to one over $k=K$, giving
\begin{eqnarray*}
0&=& \sum_{\ell=0}^n  \left\langle   
\Big( \frac{\partial}{\partial t}\Big)^\ell T_K, 
\varphi \right\rangle (-1)^K 
       \left\langle \delta_{\xi_*}, 
\Big( \frac{\partial}{\partial \xi}\Big)^K
\left(c_\ell(\xi)(\xi-\xi_*)^K\right)\right\rangle 
\\
 &=&
\sum_{\ell=0}^n  \left\langle   
\Big( \frac{\partial}{\partial t}\Big)^\ell T_K, 
\varphi \right\rangle (-1)^K 
       c_\ell(\xi_*) K! 
\\
 &=& (-1)^K K! \left\langle 
\sum_{\ell=0}^n c_\ell(\xi_*) 
\Big( \frac{\partial}{\partial t}\Big)^\ell T_K, 
\varphi\right\rangle.
\end{eqnarray*}
As the choice of $\varphi\in \calD(\mR)$ was arbitrary, it follows
that
$$
\Big(c_0(\xi_*) +c_1(\xi_*) \frac{d}{dt} +\cdots+
c_n(\xi_*)\Big(\frac{d}{dt}\Big)^n \Big) T_K=\mathbf{0}.
$$
Owing to our condition that $V(c_0,c_1,\cdots, c_n)\cap U= \emptyset$,
we know that at least one of the coefficients
$c_0(\xi_*),\cdots, c_n(\xi_*)$ is nonzero\footnote{We know that
  $c_n(\xi_*)=0$ since $\xi_*$ was one of the roots of $c_n$.}. Thus
we now have a solution $T_K$ to an ODE with constant coefficients. But
then $T_K$ is a classical smooth solution expressible as a linear
combination of analytic functions of the type $t^ke^{\lambda t}$ for
some nonnegative integers $k$ and some complex numbers $\lambda$. The
zero past condition $T_K|_{(-T,0)}=\mathbf{0}$, furthermore implies
that this analytic function must in fact be identically zero, that is
$T_K=\mathbf{0}$ in $(-T,T)$, a contradiction.  Hence our assumption
that $w$ is nonzero can't be true.  Consequently, $w=\mathbf{0}$. This
completes the proof of the lemma when $d=1$.

\medskip 

\noindent {\bf Step 2.} Suppose now that $d>1$, and that the statement
of the lemma holds for all spatial dimensions strictly less than $d$.
We wish to prove the induction step that then the result holds for
$d$-many spatial dimensions too.  Let $w$ be a solution to
\begin{equation}
\label{eqn_289_nov_14:32}
c_0(\bxi)+c_1(\bxi) \frac{\partial}{\partial t}w+\cdots
+ c_n(\bxi)\frac{\partial^n}{\partial t^n} w=\mathbf{0},
\end{equation}
with zero past. Suppose that $w$ is nonzero. Holmgren's uniqueness
theorem (Lemma~\ref{lemma_Holmgren}) implies that
$$
\textrm{supp}(w)\subset \{(\bxi,t)\in U\times \mR : c_n ( \bxi)=0,\;t\geq 0\}.
$$
If $c_n$ is constant (which must necessarily be nonzero, since $c_n$
is nonzero), then $w=\mathbf{0}$, a contradiction, and so we are done.

Suppose that $c_n$ is not a constant. Then we can decompose $V(c_n) $ as 
$$
V(c_n):= M_{d-1}\cup \cdots \cup M_{0},
$$
where $M_k$ is the union of $k$-dimensional real analytic varieties, each possessing an
analytic parametrisation, as in Subsection~\ref{Loja}.

\begin{figure}[h]
     \center 
     \psfrag{X}[c][c]{$X_1$}
     \psfrag{Y}[c][c]{$X_d$}
     \psfrag{t}[c][c]{$\btau$}
     \psfrag{s}[c][c]{$\sigma$}
     \psfrag{U}[c][c]{$U$}
     \psfrag{W}[c][c]{$W$}
     \psfrag{x}[c][c]{$\bxi$}
     \psfrag{m}[c][c]{$\;\;\;M_{d-1}$}
     \psfrag{M}[c][c]{$M_{0}$}
     \psfrag{o}[c][c]{$\mathbf{0}$}
     \psfrag{V}[c][c]{$V(f)$}
     \psfrag{z}[c][c]{$\bxi_*$}
     \includegraphics[width=9 cm]{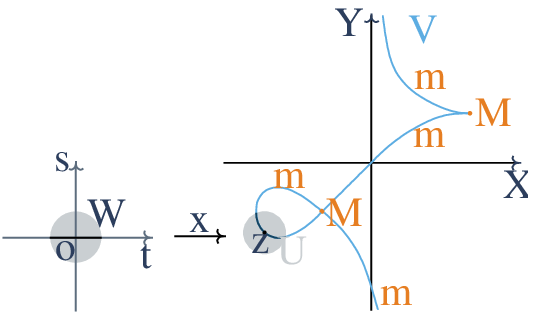}
\end{figure}

Let $\bxi_* \in M_{d'}$, where $0<d'\leq d-1$. Then there exists an
open neighbourhood $\Omega$ of $\bxi_*$, an open neighbourhood $W$ of
$\mathbf{0} \in \mR^{d}$, and a homeomorphism
$$
\mR^{d'}\times \mR^{d-d'}\supset W\owns (\btau,\bsigma)
\mapsto \bxi(\btau,\bsigma):W\rightarrow \Omega,
$$
such that $\btau\mapsto \bxi(\btau,\mathbf{0})
\in \textrm{C}^\omega(\widetilde{\Omega})$, where  
\begin{equation}
\label{eqn_Dec_3_2019_15:04} 
\widetilde{\Omega}
=\{ \btau\in \mR^{d'} \textrm{ such that }(\btau,\mathbf{0})\in W\},
\end{equation}
and
$ M_{d-1}\cap \Omega =\{\bxi(\btau,\mathbf{0}) : (\btau,\mathbf{0})\in
W\}$. Suppose that $w$ is nonzero in $\Omega\times \mR$. Then there is a
large enough $T>0$ such that $w$ is nonzero on $\Omega\times (-T,T)$.
By the Schwartz structure theorem from Subsection~\ref{Schwa}, we can
decompose $w$ locally in a neighbourhood $\Omega\times(-T,T)$ of
$(\bxi_*,0)\in \mR^{d+1}$ as
$$
w=\sum_{|\bbk |\leq K} \partial_{\bsigma}^{\bbk} T_{\bbk},
$$
for some distributions $T_{\bbk}$ on
$(M_{d'}\cap \Omega)\times(-T,T)$, such that not all
$T_{\bbk}=\mathbf{0}$ when $|\bbk|=K$, Moreover, as
$w|_{t<0}=\mathbf{0}$, it follows that $T_{\bbk}|_{t<0}=\mathbf{0}$
for each $\bbk$.  Now for a $d-d'$ tuple $\bbk=(k_{d'+1},\cdots, k_d)$
of nonnegative integers, with $|\bbk|=K$, let $\psi_{\bbk}$ be the
smooth function
$$
\psi_\bbk:=\sigma_1^{k_{d'}+1}\cdots \sigma_{d-d'}^{k_d}.
$$
Then 
$$
\partial_{\bsigma}^{\bbk'} \psi_{\bbk} \big|_{\bsigma=\mathbf{0}}
=k_{d'+1}!\cdots k_d! \cdot 
\delta_{k_{d'+1},  k'_{d'+1}}\cdots \delta_{k_d, k'_d},
$$
where $\delta_{\ell,\ell'}$ denotes the Kronecker delta, defined to be
$1$ if $\ell=\ell'$, and $0$ otherwise.  Using
$$ 
c_0(\bxi)+c_1(\bxi) \frac{\partial}{\partial t}w+\cdots+
c_n(\bxi)\frac{\partial^n}{\partial t^n} w=\mathbf{0},
$$
it follows that for all
$\varphi \in \calD(\widetilde{\Omega}\times (-T,T))$, where
$\widetilde{\Omega}$ is as defined in \eqref{eqn_Dec_3_2019_15:04}, we
have
\begin{eqnarray*}
0&=& 
\left\langle \sum_{\ell=0}^n \sum_{|\bbk'|\leq K} c_\ell (\bxi(\btau,\bsigma))
  \left(\frac{\partial}{\partial t}\right)^\ell 
\partial_{\bsigma}^{\bbk'} T_{\bbk'} ,\;\psi_{\bbk} \otimes \varphi \right\rangle
\\
&=& 
\sum_{\ell=0}^n  \left\langle   (-1)^K  k_{d'+1}!\cdots k_d! c_\ell(\bxi(\btau,\mathbf{0}))
\Big( \frac{\partial}{\partial t}\Big)^\ell T_{\bbk}, \varphi \right\rangle .
\end{eqnarray*}
But as
$\btau\mapsto \bxi(\btau,\mathbf{0})\in
\textrm{C}^\omega(\widetilde{\Omega})$,
and each $c_\ell\in \textrm{C}^\omega(U)$, it follows that their
(well-defined) composition
$\btau \mapsto c_\ell(\bxi(\btau,\mathbf{0}))$ is real analytic too.
Thanks to the assumption that
$V(c_0,c_1,\cdots, c_n )\cap U= \emptyset$, we also obtain in
particular that with
$\widetilde{c}_\ell(\btau):=c_\ell(\bxi(\btau, \mathbf{0}))$,
$\ell=0,1,\cdots, n$, and with
$$
p_0:= \sum_{\ell=0}^n \widetilde{c}_\ell(\bxi(\btau,0)) T^\ell \in \textrm{C}^\omega(\widetilde{\Omega})[T], 
$$
we have
$V(\widetilde{c}_0,\widetilde{c}_1,\cdots, \widetilde{c}_n)\cap
\mR^{d'}=\emptyset$,
and $D_{p_0} T_{\bbk}=\mathbf{0}$. We also recall from the above that
$T_{\bbk}|_{t<0}=\mathbf{0}$.  By the induction hypothesis, we conclude that $T_{\bbk}=\mathbf{0}$.  Repeating this
argument for each $\bbk$ satisfying $|\bbk|=K$, gives 
$T_{\bbk}=\mathbf{0}$ whenever $|\bbk|=K$. But this is a
contradiction. This means that
$M_{d'}\cap \textrm{supp}(w)=\emptyset$. As $d'$ such that
$0<d'\leq d-1$ was arbitrary, we conclude that
$ \textrm{supp}(w)\subset M_0.  $ But now we repeat the same argument
above from Step 1, when $w$ was supported on isolated lines, to
conclude that $\textrm{supp}(w)=\emptyset$, that is, $w=\mathbf{0}$.
This completes the induction step, and the proof of the lemma.
\end{proof}

\section{Proof of sufficiency}
\label{final_section}

\noindent In Theorem~\ref{main_result_1}, we had seen that the
condition $V( C_{\bbX}(p) ) \cap i \mR^d= \emptyset$ is necessary for
the triviality of the null solution space
$N_{\calL(\calD(\mR), \calS'(\mR^d))}(p)=\{\mathbf{0}\}$.  We now show
that this condition is also sufficient.

\begin{theorem}
\label{main_result_3} $\;$

\noindent 
Let $ p=a_0(\bbX)+a_1(\bbX)T+\cdots+ a_n(\bbX)T^n \in \mC[\bbX][T] $
such that $a_n\neq \mathbf{0}\in \mC[\bbX]$.

\noindent If $ V( C_{\bbX}(p) ) \cap i \mR^d= \emptyset$, then
$N_{\calL(\calD(\mR), \calS'(\mR^d))}(p)=\{\mathbf{0}\}$.
\end{theorem}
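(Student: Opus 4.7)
The plan is to reduce the theorem to Lemma~\ref{main_lemma} via the spatial Fourier transform. Suppose $u \in N_{\calL(\calD(\mR), \calS'(\mR^d))}(p)$ is a null solution. I would set $w := \widehat{u}$, the spatial Fourier transform of $u$, and identify $w$ with its image $W \in \calD'(\mR^d \times \mR)$ under the Schwartz-kernel embedding $\calL(\calD(\mR), \calS'(\mR^d)) \hookrightarrow \calD'(\mR^{d+1})$ recalled in the introduction. The goal is then to verify that $W$ fulfils the six hypotheses of Lemma~\ref{main_lemma} with $U := \mR^d$ and with real-analytic coefficients $c_\ell(\bxi) := a_\ell(i\bxi)$ for $\ell = 0, 1, \ldots, n$.

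I would then discharge the hypotheses one by one. The $c_\ell$ are polynomials in $\bxi$, hence real analytic on $\mR^d$, and $c_n$ is nonzero because $a_n \neq \mathbf{0} \in \mC[\bbX]$. The algebraic assumption $V(C_{\bbX}(p)) \cap i\mR^d = \emptyset$ rewrites, upon the substitution $\bbX = i\bxi$, as $V(c_0, \ldots, c_n) \cap \mR^d = \emptyset$, which is precisely hypothesis (3) of the lemma. The null past of $u$ transfers to $w$ since the spatial Fourier transform acts only in the $\bxi$ slot: for any $\varphi \in \calD(\mR)$ supported in $(-\infty, 0)$, $u(\varphi) = \mathbf{0} \in \calS'(\mR^d)$ implies $w(\varphi) = \widehat{u(\varphi)} = \mathbf{0}$, which via the kernel embedding translates to $W|_{t<0} = \mathbf{0}$. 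Finally, the PDE $D_p u = \mathbf{0}$ becomes, after spatial Fourier transform (converting $\partial/\partial x_k$ into multiplication by $i\xi_k$), the identity
\[
c_0(\bxi) W + c_1(\bxi) \frac{\partial W}{\partial t} + \cdots + c_n(\bxi) \frac{\partial^n W}{\partial t^n} = \mathbf{0}.
\]
Lemma~\ref{main_lemma} then delivers $W = \mathbf{0}$, that is, $\widehat{u} = \mathbf{0}$. Since the spatial Fourier transform is an automorphism of $\calL(\calD(\mR), \calS'(\mR^d))$ (inverted by the spatial inverse Fourier transform), we conclude $u = \mathbf{0}$, which is the desired triviality of $N_{\calL(\calD(\mR), \calS'(\mR^d))}(p)$.

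All of the conceptual heavy lifting sits in Lemma~\ref{main_lemma}; the present argument is essentially a translation between two formalisms. The only genuinely delicate point I foresee is verifying that the operations on $u$ and $\widehat{u}$ built intrinsically into the $\calL(\calD(\mR), \calS'(\mR^d))$ framework (the slot-wise derivatives in $x_k$ and $t$, the restriction to $\{t < 0\}$, and the spatial Fourier transform itself) correspond under the Schwartz-kernel embedding to the standard distributional operations on $\mR^{d+1}$. Once that compatibility is in place, the six hypotheses of the technical lemma transfer cleanly, and the lemma closes the argument.
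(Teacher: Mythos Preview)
Your proposal is correct and follows essentially the same route as the paper: take the spatial Fourier transform, set $c_\ell(\bxi)=a_\ell(i\bxi)$, verify the hypotheses of Lemma~\ref{main_lemma}, and invert. The paper's proof is in fact terser than yours, simply asserting the transformed equation and invoking the lemma, whereas you spell out the compatibility checks; but the argument is the same.
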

\begin{proof} 
  Suppose that $V(C_{\bbX}(p) )\cap i \mR^d = \emptyset$.  Let
  $u\in \calL(\calD(\mR), \calS'(\mR^d))$ be such that
  $u|_{t<0}=\mathbf{0}$, $D_p u=\mathbf{0}$ and such that
  $u\neq \mathbf{0}$.  Upon taking Fourier transformation on both
  sides of the equation $D_p u =\mathbf{0}$ with respect to the
  spatial variables, we obtain
\begin{equation}
\label{eq_ast_2}
a_0(i \bxi )\widehat{u} + a_1( i \bxi)\frac{\partial}{\partial
  t}\widehat{u} + \cdots+a_n( i \bxi) \Big(\frac{\partial}{\partial
  t}\Big)^n\widehat{u} =\mathbf{0}.
\end{equation} 
By Lemma~\ref{main_lemma}, this implies
$\widehat{u}=\mathbf{0}$. Taking the inverse Fourier transform yields
$u=\mathbf{0}$, completing the proof.
\end{proof}

\begin{example}[Diffusion equation revisited] Consider the diffusion
  equation
$$
\Big(\frac{\partial}{\partial t}-\Delta \Big)u=\mathbf{0},
$$
that is, $D_pu=\mathbf{0}$, where
$ p(\bbX,T)=T - (X_1^2+\cdots+X_d^2)$.

The constant polynomial $a_1=\mathbf{1}$ is nonzero, and so
$V(C_{\bbX}(p) )\cap i \mR^d=\emptyset$.  Theorem~\ref{main_result_3}
implies that $N_{\calL(\calD(\mR), \calS'(\mR^d))}(p)=\{\mathbf{0}\}$,
in conformity with our physical intuition.  \hfill$\Diamond$
\end{example}

\noindent Modern physics rejects the diffusion equation as an accurate
model of physical reality since it is not `Lorentz invariant',
admitting infinite propagation speeds. This can already be seen in the
case of classical solutions to the initial value problem to the
diffusion equation, where the solution is given by a (spatial)
convolution of the initial data $f$ with the Gaussian kernel, and so
for arbitrarily small time instants $t>0$ and at $\bbx=\mathbf{0}$, even arbitrarily far away
 initial data has an influence, which violates the special
relativistic tenet that nothing travels faster than the speed of
light.  With this in mind, we choose to illustrate our main theorem also with the Lorentz
invariant Klein-Gordon equation.

\begin{example}[Klein-Gordon equation on Minkowski space]
For $m\in \mR$, consider the equation 
$$
\Big(\frac{\partial^2}{\partial t^2}-\Delta +m^2\Big)u=\mathbf{0},
$$
that is, $D_pu=\mathbf{0}$, where
$ p(\bbX,T)=T^2 - (X_1^2+\cdots+X_d^2)+m^2$.

The constant polynomial $a_2=\mathbf{1}$ is nonzero, and so
$V(C_{\bbX}(p) )\cap i \mR^d=\emptyset$.  Theorem~\ref{main_result_3}
implies that $N_{\calL(\calD(\mR), \calS'(\mR^d))}(p)=\{\mathbf{0}\}$. 

We remark that Proposition~\ref{prop_Hor} also gives a sensible result 
in this case, since 
$$
\deg (p)=\deg(T^2 - (X_1^2+\cdots+X_d^2)+m^2)=2=\deg (T^2+m^2)=\deg(p(\mathbf{0},T)),
$$
and so $N_{\calD'(\mR^{d+1})}(p)= \{\mathbf{0}\}$ and
$N_{\textrm{C}^\infty(\mR^{d+1})}(p)= \{\mathbf{0}\}$. 

If $\eta_{\mu\nu}$ ($\mu,\nu=0,1,2,3$) 
are the Minkowski metric tensor components 
in the Cartesian/inertial coordinates, then the only Lorentz-invariant 
scalar linear constant coefficient differential operator one can build 
has the form 
$$
\displaystyle \sum_{n=0}^N c_n (\eta^{\mu\nu} \partial_\mu \partial_\nu)^n,
$$
where $[\eta^{\mu\nu}]$ denotes the inverse of the metric matrix $[\eta_{\mu\nu}]$, and 
$c_k\in \mC$. This corresponds to the polynomial 
$$
p=\displaystyle \sum_{n=0}^N c_n(T^2-(X_1^2+\cdots+X_d^2))^n,
$$
and so $\deg(p)=\deg(p(\mathbf{0},T))$ is {\em always} satisfied 
for such Lorentz invariant partial differential operators. Thus H\"ormander's Proposition~\ref{prop_Hor} is physically 
sound from the spacetime perspective of special relativity.
\hfill$\Diamond$
 \end{example}

 \begin{example}Consider the equation 
 $$
 \frac{\partial^{d+1}}{\partial t \partial x_1\cdots \partial x_d} u=\mathbf{0},
$$
that is, $D_p u=\mathbf{0}$, where $p(\bbX,T)= X_1\cdots X_d T$.
 Then 
 $$
 V(C_{\bbX}(p))=\bigcup\limits_{k=1}^d \textrm{span}_{\mC}(\bbe_k),
 $$
where $\bbe_k$ is the standard basis vector in $\mC^d$ with $1$ in the $k$th entry, and all 
other entries zeroes. Thus $V(C_\bbX (p))\cap i\mR^d\neq \emptyset$, so that 
$$
N_{\calL(\calD(\mR), \calS'(\mR^d))}(p)\neq \{\mathbf{0}\}.
$$
This is  expected, and we can easily 
construct nontrivial null solutions as in the proof of Theorem~\ref{main_result_1}. In fact, with $\mathbf{1}$ denoting the 
 constant function on $\mR^d$ taking value $1$ everywhere, we can take  $u:=\mathbf{1}\otimes \Theta(t)$, where $\Theta$ 
 is as in \eqref{eqn_3_Dec_2019_15:55}. Then $\mathbf{0}\neq u\in N_{\calL(\calD(\mR), \calS'(\mR^d))}(p)$ since it 
 has zero past and satisfies $D_pu=\mathbf{0}$. 
\hfill$\Diamond$
\end{example}

\section{Spatial profile in Besov spaces} 
\label{section_final}

\noindent We mention that besides the space $\calL(\calD(\mR), \calS'(\mR^d))$, 
 one may consider also other natural solution spaces with some growth restriction in the spatial directions. 
 As an example, we consider $\calL(\calD(\mR), B_{p,k}(\mR^d))$, where $B_{p,k}(\mR^d)$ is 
 a subspace of $\calS'(\mR^d)$ defined below. 

We follow \cite[\S 10.1]{HorV2}. A positive function $k$ defined on
$\mR^d$ will be called a {\em temperate weight function} if there
exist positive constants $C$ and $N$ such that 
$$
k(\bxi+\beeta)\leq (1+C\| \bxi\|_2 )^N k(\beeta), \quad \bxi,\;\beeta\in \mR^d,
$$
where $\|\cdot\|_2$ denotes the Euclidean norm on $\mR^d$. 
The set of all such functions will be denoted by $\calK$. If $k\in \calK$ and
$1\leq p\leq \infty$, then the {\em Besov space}  $B_{p,k}$ is the set of all
distributions $u\in \calS'(\mR^d)$ such that the Fourier transform
$\widehat{u}$ of $u$ is a function and 
$$
\|u\|_{p,k}=\Big(\int_{\mR^d} |k(\bxi)\widehat{u}(\bxi)|^p \textrm{d}^d\bxi \Big)^{1/p}<\infty.
$$
When $p=\infty$, we take $\|u\|_{p,k}$ as $\textrm{ess.sup
}|k(\cdot)\widehat{u}(\cdot)|$. Then $B_{p,k}$ is a Banach space with
the above norm. The usual scale of Sobolev space $H_{s}(\mR^d)$
parameterised by real numbers $s$ corresponds to the class
$$
\calK_{\scriptscriptstyle\textrm{Sob}}:=\{k_s:s\in\mR\}\subset \calK,
$$
where 
$$
k_s(\bxi):=(1+\|\bxi\|^2)^{s/2}.
$$
We can think
of the space $\calL(\calD(\mR), B_{p,k}(\mR^d))$ as a subspace of
$\calD'(\mR^{d+1})$: if $u\in \calL(\calD(\mR), B_{p,k}(\mR^d))$, then
we define the distribution $U\in \calD'(\mR^{d+1})$ by 
$$
\langle U,\psi \otimes \varphi\rangle=\int_{\mR^d} \left(\langle u,\varphi\rangle\right)(\bxi) \psi(  \bxi) \textrm{d}^d\bxi
$$
for $\varphi \in \calD(\mR)$, $\psi \in \calD(\mR^d)$. 

We prove the following result. Despite again using the Fourier transform as the main tool, 
akin to the proof of Lemma~\ref{main_lemma}, the proof is markedly simpler, thanks to the 
possibility of `evaluation' at $\bxi$ (since for every `time' test function $\varphi \in \calD(\mR)$, we have that $\widehat{u}(\varphi)$ 
is a function of the variable $\bxi\in \mR^d$). 

\begin{theorem}
\label{main_result_sob}$\;$

\noindent 
Let $p\in \mC[\bbX,T]$.  Then $N_{\calL(\calD(\mR),
  B_{p,k}(\mR^d))}(p)=\{\mathbf{0}\}$ if and only if $p\neq \mathbf{0}$.
\end{theorem}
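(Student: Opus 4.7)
The easy direction (``only if'') is immediate: if the polynomial $p$ is $\mathbf{0}$, take any nonzero $v\in\calS(\mR^d)$ (which lies in $B_{p,k}(\mR^d)$ since $k\in\calK$ grows at most polynomially while $\widehat{v}$ decays rapidly) and set $u(\varphi):=\langle\Theta,\varphi\rangle\,v$, with $\Theta$ the zero-past function of \eqref{eqn_3_Dec_2019_15:55}. Then $u\in\calL(\calD(\mR),B_{p,k}(\mR^d))$ is nonzero, has zero past, and trivially satisfies $D_{\mathbf{0}}u=\mathbf{0}$.

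For the ``if'' direction, suppose $p=a_0+a_1T+\cdots+a_nT^n\in\mC[\bbX][T]$ is nonzero and let $u\in\calL(\calD(\mR),B_{p,k}(\mR^d))$ be a null solution. The plan is the strategy already sketched in the introduction for cases (6)--(10): because each $u(\varphi)$ lies in $B_{p,k}(\mR^d)$, its spatial Fourier transform $g_\varphi:=\widehat{u(\varphi)}$ is a \emph{bona fide} function on $\mR^d$, so after spatially Fourier transforming $D_pu=\mathbf{0}$ one can ``freeze'' $\bxi$ for a.e.\ fixed $\bxi$ and fall back on the elementary ODE observation recorded as item~(2) of the bulleted remarks preceding Theorem~\ref{main_result_1}. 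Spelled out, the transformed equation reads
\[
\sum_{k=0}^n (-1)^k a_k(i\bxi)\,g_{\varphi^{(k)}}(\bxi)=0 \qquad \textrm{a.e.\ }\bxi\in\mR^d,
\]
for every $\varphi\in\calD(\mR)$. Since $p\neq\mathbf{0}$, some $a_j$ is a nonzero polynomial, hence the exceptional set $B:=\{\bxi\in\mR^d:a_0(i\bxi)=\cdots=a_n(i\bxi)=0\}$ is a proper real-algebraic subvariety of $\mR^d$ and has Lebesgue measure zero.

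For each $\bxi_0\in\mR^d\setminus B$ and each smooth approximate identity $\chi_\epsilon\in\calD(\mR^d)$ concentrated at $\bxi_0$, the distributions $S^{(k)}_\epsilon\in\calD'(\mR)$ defined by
\[
\langle S^{(k)}_\epsilon,\varphi\rangle := \int a_k(i\bxi)\,\chi_\epsilon(\bxi)\,g_\varphi(\bxi)\,\textrm{d}^d\bxi
\]
are continuous in $\varphi$ (by continuity of $u:\calD(\mR)\to B_{p,k}(\mR^d)$), vanish for $t<0$ (by the zero-past of $u$), and jointly satisfy $\sum_{k=0}^n\partial_t^k S^{(k)}_\epsilon=\mathbf{0}$. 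Normalizing by $\int\chi_\epsilon$ and sending $\epsilon\to 0^+$, the Lebesgue differentiation theorem applied simultaneously to a countable dense family $\{\varphi_j\}\subset\calD(\mR)$ --- and then extended to all $\varphi$ by continuity --- will produce, for a.e.\ $\bxi_0\in\mR^d\setminus B$, a distribution $T_{\bxi_0}\in\calD'(\mR)$ with zero past satisfying the constant-coefficient ODE $\sum_k a_k(i\bxi_0)\partial_t^k T_{\bxi_0}=\mathbf{0}$, not all of whose coefficients vanish. Its distributional solutions are real-analytic (linear combinations of $t^je^{\lambda t}$), so zero past forces $T_{\bxi_0}=\mathbf{0}$, giving $g_\varphi(\bxi_0)=0$ for every $\varphi$. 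Varying $\bxi_0$ off a null set yields $g_\varphi\equiv\mathbf{0}$, whence $u(\varphi)=\mathbf{0}$ by Fourier inversion, and therefore $u=\mathbf{0}$.

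The main obstacle is the slicing/localization step: the pointwise values of the $L^p_k$-functions $g_\varphi$ at a fixed $\bxi_0$ are defined only up to a $\varphi$-dependent null set, so I have to choose Lebesgue representatives compatibly over a countable dense subfamily of $\varphi$'s and then extend to the full test-function space using the continuity of $u$ into $B_{p,k}(\mR^d)$. Everything else is routine, and markedly simpler than the analogous argument in Lemma~\ref{main_lemma}, precisely because $\widehat{u(\varphi)}$ is genuinely a function of $\bxi$ rather than a distribution, so no Holmgren/\L ojasiewicz/Schwartz-structure machinery is required.
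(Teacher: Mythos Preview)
Your approach is essentially the paper's: spatially Fourier transform, freeze $\bxi$ off a null set to obtain a constant-coefficient ODE in $t$ with zero past, and conclude the solution vanishes. The paper's proof is terser --- it simply fixes $\bxi$ with $a_n(i\bxi)\neq 0$ and asserts that $(\widehat{u}(\varphi))(\bxi)=0$ for all $\varphi$, glossing over the pointwise-evaluation issue you carefully flag and handle via approximate identities and a countable dense family; it also works only on the complement of the zero set of the \emph{leading} coefficient $a_n$ rather than your common zero set $B$, but both are Lebesgue-null so this difference is immaterial.
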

\begin{proof}
 `Only if' part:
  Suppose that $p=\mathbf{0}$. Then we just take any nonzero $\psi \in
  B_{p,k}$. For example, any nonzero $\psi\in\calD(\mR^d)$ will do. 
  Moreover, let $\Theta$ be the nonzero function in $\textrm{C}^\infty(\mR)$
  which has a zero past given in \eqref{eqn_3_Dec_2019_15:55}. 
Define $u$ by $u(x,t):=\psi(x) \Theta(t)$ for $x\in \mR^d$ and $t\in
\mR$. Then  we have that $u\in \calL(\calD(\mR), B_{p,k}(\mR^d))$, $u|_{t<0}=\mathbf{0}$ and
$D_pu=\mathbf{0}$. But $u\neq \mathbf{0}$, and so $N_{\calL(\calD(\mR), 
  B_{p,k}(\mR^d))}(p)\neq \{\mathbf{0}\}$.

\medskip

\noindent `If' part: Suppose that $p\neq \mathbf{0}$.  Let $u\in
\calL(\calD(\mR), B_{p,k}(\mR^d))$ be such that $u|_{t<0}=\mathbf{0}$ and $D_p
u=\mathbf{0}$. Let
 $
p= a_0+a_1 T+\cdots+ a_n T^n \in \mC[\bbX][T] ,
$ 
where $a_0,a_1,\cdots, a_n\in \mC[\bbX]$ and $a_n\neq
\mathbf{0}$ in $\mC[\bbX]$. Upon taking Fourier transformation on
both sides of the equation $D_p u =\mathbf{0}$ with respect to the spatial
variables, we obtain
\begin{equation}
\label{eq_ast'}
a_0(i \bxi)\widehat{u} + a_1(i\bxi)\frac{\partial}{\partial
  t}\widehat{u} + \cdots+a_n(i\bxi) \Big(\frac{\partial}{\partial
  t}\Big)^n\widehat{u} =\mathbf{0}.
\end{equation}
But if we fix a $\bxi\in \mR^d$ such that $a_n(i \bxi)\neq 0$, then it
follows that $(\widehat{u}(\varphi))(\bxi)=0$ for all $\varphi \in \calD(\mR)$. 
Since the Lebesgue measure of the set of zeros of the polynomial function $a_n(i\bxi)$ is zero, it
follows that for each $\varphi \in \calD(\mR)$, the function $\mR^d \owns \bxi \mapsto (\widehat{u}(\varphi))(\bxi)$ almost everywhere, and 
so $\widehat{u}(\varphi)=\mathbf{0}$. But then $\widehat{u}=\mathbf{0}$ too, and so $u=\mathbf{0}$. This
completes the proof.
\end{proof}

\noindent 
And immediate consequence is the following, since the $H_s(\mR^d)$ are all special instances of 
the Besov spaces $B_{p,q}(\mR^d)$ \cite[Example~10.1.2, p.5]{HorV2}, and 
also $\calS(\mR^d)\subset B_{p,k}(\mR^d)$ \cite[Thm.~10.1.7, p.7]{HorV2}.

\begin{corollary}
\label{cor_sob}$\;$

\noindent 
Let $p\in \mC[\bbX,T]$ and $S=\calL(\calD(\mR), H_s(\mR^d))$ or $S=\calL(\calD(\mR), \calS(\mR^d))$. 

\noindent 
Then $N_{S}(p)=\{\mathbf{0}\}$ if and only if $p\neq \mathbf{0}$.
\end{corollary}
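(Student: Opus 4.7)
The plan is to obtain both halves of the corollary as direct pullbacks of Theorem~\ref{main_result_sob} along continuous spatial inclusions. The `only if' direction ($p=\mathbf{0}\Rightarrow N_S(p)\neq\{\mathbf{0}\}$) requires nothing new for either choice of $S$: I pick any nonzero $\psi\in\calD(\mR^d)$, which lies in both $H_s(\mR^d)$ and $\calS(\mR^d)$, and set $u:=\psi\otimes\Theta$ with $\Theta$ the nonzero $\textrm{C}^\infty$-function with zero past from \eqref{eqn_3_Dec_2019_15:55}; this furnishes a nonzero element of $N_S(\mathbf{0})$.

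For the sufficiency direction in the Sobolev case $S=\calL(\calD(\mR),H_s(\mR^d))$, I would use the standard fact, recorded in \cite[Example~10.1.2, p.5]{HorV2} and already alluded to in the paragraph following Theorem~\ref{main_result_sob}, that the weight $k_s(\bxi):=(1+\|\bxi\|_2^2)^{s/2}$ is temperate and that $H_s(\mR^d)$ coincides as a topological vector space with the Besov space $B_{2,k_s}(\mR^d)$. Consequently $\calL(\calD(\mR),H_s(\mR^d))=\calL(\calD(\mR),B_{2,k_s}(\mR^d))$, and Theorem~\ref{main_result_sob}, applied to the polynomial $p$, the integrability exponent $2$, and the weight $k_s$, yields the conclusion at once.

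For the sufficiency direction in the Schwartz case $S=\calL(\calD(\mR),\calS(\mR^d))$, I would fix any convenient temperate weight, say $k\equiv 1$ together with integrability exponent $2$ (so that $B_{2,k}(\mR^d)=L^2(\mR^d)$), and invoke the continuous inclusion $\calS(\mR^d)\hookrightarrow B_{2,k}(\mR^d)$ from \cite[Thm.~10.1.7, p.7]{HorV2}. Post-composing $u\in\calL(\calD(\mR),\calS(\mR^d))$ with this inclusion produces an element $\tilde{u}\in\calL(\calD(\mR),B_{2,k}(\mR^d))$ that inherits the zero past and the equation $D_p\tilde{u}=\mathbf{0}$, because the duality-based definitions of $\partial_{x_k}$ and $\partial_t$ for spatially tempered distributions are manifestly compatible with post-composition by a continuous spatial inclusion. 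Theorem~\ref{main_result_sob} then forces $\tilde{u}=\mathbf{0}$, and the injectivity of the inclusion gives $u=\mathbf{0}$.

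I do not foresee any substantive obstacle: the entire argument is a functorial pullback along the spatial inclusions $H_s(\mR^d)=B_{2,k_s}(\mR^d)$ and $\calS(\mR^d)\hookrightarrow B_{2,k}(\mR^d)$, with the heavy lifting already done in Theorem~\ref{main_result_sob}. The only routine checks are the two standard facts from H\"ormander's Vol.~II cited above, together with the trivial compatibility of the distributional spatial derivatives with those inclusions.
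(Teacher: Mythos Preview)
Your proposal is correct and follows essentially the same approach as the paper: the paper simply notes that the corollary is an immediate consequence of Theorem~\ref{main_result_sob} via the identifications $H_s(\mR^d)=B_{2,k_s}(\mR^d)$ and the inclusion $\calS(\mR^d)\subset B_{p,k}(\mR^d)$, citing the same two references from \cite{HorV2} that you invoke. Your write-up is more explicit about the mechanics (the `only if' direction and the injectivity argument for the Schwartz case), but the underlying argument is identical.
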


\noindent 
By the Payley-Wiener-Schwartz theorem ~\cite[Prop. 29.1, p. 307]{Tre}, we know that the Fourier transform of elements of $\calE'(\mR^d)$ 
 can be extended to entire functions on $\mC^d$. Thus the same proof, mutatis mutandis, 
 as that of Theorem~\ref{main_result_sob} gives the following. 
 
 \begin{theorem}
  \label{theorem_last_one}
  $\;$

\noindent 
Let $p\in \mC[\bbX,T]$.  Then $N_{\calL(\calD(\mR),
  \calE'(\mR^d))}(p)=\{\mathbf{0}\}$ if and only if $p\neq \mathbf{0}$.
\end{theorem}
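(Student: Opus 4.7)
The \emph{only if} direction goes through as in Theorem~\ref{main_result_sob}: if $p=\mathbf{0}$, pick any nonzero $\psi \in \calD(\mR^d)\subset \calE'(\mR^d)$ and define $u := \psi\otimes \Theta$ where $\Theta$ is the function in \eqref{eqn_3_Dec_2019_15:55}. Then $u\in \calL(\calD(\mR),\calE'(\mR^d))$ is a nonzero null solution, so $N_{\calL(\calD(\mR),\calE'(\mR^d))}(p)\neq \{\mathbf{0}\}$.

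For the \emph{if} direction, I plan to run the argument of Theorem~\ref{main_result_sob} almost verbatim, replacing the ``almost everywhere'' step by analytic continuation. Write $p=a_0+a_1T+\cdots+a_nT^n$ with $a_n\neq \mathbf{0}\in \mC[\bbX]$, and take $u\in \calL(\calD(\mR),\calE'(\mR^d))$ with $u|_{t<0}=\mathbf{0}$ and $D_pu=\mathbf{0}$. Taking the spatial Fourier transform yields
$$
\sum_{\ell=0}^n a_\ell(i\bxi)\,\partial_t^\ell \widehat{u}=\mathbf{0}.
$$
For each $\varphi\in\calD(\mR)$, $u(\varphi)\in\calE'(\mR^d)$, so by the Paley--Wiener--Schwartz theorem, $\widehat{u}(\varphi):=\widehat{u(\varphi)}$ is the restriction to $\mR^d$ of an entire function on $\mC^d$. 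In particular, for every $\bxi\in \mR^d$ the map $\varphi\mapsto (\widehat{u}(\varphi))(\bxi)$ defines a distribution $v_\bxi\in\calD'(\mR)$; and $u|_{t<0}=\mathbf{0}$ gives $v_\bxi|_{t<0}=\mathbf{0}$.

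Next I would fix $\bxi$ in the set $\{\bxi\in\mR^d:a_n(i\bxi)\neq 0\}$, which is a dense open subset of $\mR^d$ since $a_n\neq\mathbf{0}$. At such a $\bxi$, unwinding the above equation against an arbitrary $\varphi\in\calD(\mR)$ and using $\partial_t^\ell\widehat{u}(\varphi)=(-1)^\ell\widehat{u}(\varphi^{(\ell)})$ shows that
$$
\sum_{\ell=0}^n a_\ell(i\bxi)\,\partial_t^\ell v_\bxi=\mathbf{0}
$$
as an ODE in $\calD'(\mR)$ with constant coefficients and nonvanishing leading coefficient. So $v_\bxi$ is a classical combination of functions $t^ke^{\lambda t}$, and $v_\bxi|_{t<0}=\mathbf{0}$ forces $v_\bxi=\mathbf{0}$.

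Finally, for each $\varphi\in\calD(\mR)$ the entire function $\bxi\mapsto(\widehat{u}(\varphi))(\bxi)$ vanishes on a dense open subset of $\mR^d$, hence vanishes identically on $\mR^d$ (and indeed on all of $\mC^d$). Therefore $\widehat{u}(\varphi)=\mathbf{0}$ for every $\varphi$, i.e.\ $\widehat{u}=\mathbf{0}$, whence $u=\mathbf{0}$. There is no real obstacle here: the only substantive change from Theorem~\ref{main_result_sob} is the use of entire extension (via Paley--Wiener--Schwartz) instead of an ``almost everywhere'' argument to conclude that $\widehat{u}(\varphi)$ vanishes everywhere once it vanishes off the real zero set of $a_n(i\cdot)$.
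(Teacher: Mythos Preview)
Your proposal is correct and follows exactly the approach the paper indicates: the paper's proof of this theorem is literally the sentence ``the same proof, mutatis mutandis, as that of Theorem~\ref{main_result_sob}'', with the single modification being the use of the Paley--Wiener--Schwartz theorem to get an entire extension of $\widehat{u}(\varphi)$ and hence conclude identical vanishing from vanishing on the (open) complement of the zero set of $a_n(i\cdot)$. You have simply spelled out those mutanda in full.
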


\section{Spatially periodic distributions}
\label{section_finalists_final}

\noindent In this final section, we consider the space $\calD'_{\mA}(\mR^{d+1})$, which is, roughly speaking, 
the set of all distributions on $\mR^{d+1}$ that are  
periodic in the spatial directions with a discrete set $\mA$ of periods. The precise definition of 
$ \calD'_{\mA}(\mR^{d+1})$ is given below. 

 For ${\mathbf{a}}\in \mR^d$, the {\em translation operation} 
${\mathbf{S_a}}$ on distributions in $\calD'(\mR^d)$ is
defined by
$$
\langle {\mathbf{S_a}}(T),\varphi\rangle=\langle T,\varphi(\cdot+{\mathbf{a}})\rangle\;
\textrm{ for all }\varphi \in \calD(\mR^d).
$$
A distribution $T\in \calD'(\mR^d)$ is said to be {\em periodic with a period} 
$\mathbf{a}\in \mR^d$  if $T= {\mathbf{S_a}}(T)$.  

Let $\mA:=\{{\mathbf{a_1}}, \cdots, {\mathbf{a_d}}\}$ be a linearly independent set vectors in $\mR^d$. 
We define $\calD'_{\mA}(\mR^d)$ to be the set of all distributions $T$ that satisfy 
$$
{\mathbf{S_{a_k}}}(T)=T, \quad k=1,\cdots, d.
$$
From \cite[\S34]{Don}, $T$ is a tempered distribution, and 
from the above it follows by taking Fourier transforms that 
 $
(1-e^{i {\mathbf{a_k}} \cdot \bxi})\widehat{T}=0$ for $ k=1,\cdots,d.
$ It can be seen that 
$$
\widehat{T}=\sum_{\mathbf{v} \in A^{-1} 2\pi \mZ^d} \alpha_{\mathbf{v}}(T) \delta_{\mathbf{v}},
$$
for some scalars $\alpha_{\mathbf{v}}\in \mC$, and where  $A$ is the matrix with its rows equal to 
the transposes of the column vectors ${\mathbf{a_1}}, \cdots, {\mathbf{a_d}}$:
$$
A:= \left[ \begin{array}{ccc} 
    \mathbf{a_1}^{\top} \\ \vdots \\ \mathbf{a_d}^{\top} 
    \end{array}\right].
$$
By the Schwartz Kernel Theorem \cite[p.~128, Theorem~5.2.1]{Hor},  we know that 
$\calD'(\mR^{d+1})$ is isomorphic as a topological space to $\calL(\calD(\mR), \calD'(\mR^d))$, 
the space of all continuous linear maps from $\calD(\mR)$ to  $\calD'(\mR^d)$, thought of as vector-valued distributions. 
In this section, we indicate this isomorphism by putting an arrow on top of elements of 
$\calD'(\mR^{d+1})$. Thus for $u\in \calD'(\mR^{d+1})$, we set $\vec{u}\in \calL(\calD(\mR), \calD'(\mR^d))$ 
to be the vector valued distribution defined by 
$$
\langle \vec{u}(\varphi), \psi\rangle=\langle u, \psi \otimes \varphi\rangle 
$$
for $\varphi\in \calD(\mR)$ and $\psi \in \calD(\mR^d)$. We define 
$$ 
\calD'_{\mA}(\mR^{d+1})=\{u\in \calD'(\mR^{d+1}): \textrm{ for all }\varphi \in \calD(\mR), \;
\vec{u}(\varphi) \in \calD'_{\mA}(\mR^d)\}.
$$
Then for $u\in \calD'_{\mA}(\mR^{d+1})$, 
$$
 \frac{\partial}{\partial x_k}u \in  \calD'_{\mA}(\mR^{d+1}) \;\textrm{ for } k=1, \cdots, d, \;
\textrm{ and } \;\; 
\frac{\partial}{\partial t}u \in  \calD'_{\mA}(\mR^{d+1}). 
$$
Also, for $u\in \calD'_{\mA}(\mR^{d+1})$, we define $\widehat{u}\in \calD'(\mR^{d+1})$ 
by 
$$
\langle \widehat{u}, \psi \otimes \varphi \rangle
=
\langle \vec{u}(\varphi), \widehat{\psi} \rangle,
$$
for $\varphi \in \calD(\mR)$ and $\psi \in \calD(\mR^d)$.

We have the following characterisation for the space of null solutions to be trivial. 

\begin{theorem}
 \label{theorem_lalalast_one}$\;$
 
 \noindent 
Suppose that $\mA\!=\!\{{\mathbf{a_1}}, \cdots, {\mathbf{a_d}}\}$ is a linearly independent set of vectors in $\mR^d$. 

\noindent Let $S=\calD'_{\mA}(\mR^{d+1})$ and $p \in \mC[\bbX, T]$.

\noindent 
Then $N_{S}(p)=\{\mathbf{0}\}$ if and only if for all $\mathbf{v}\in A^{-1}2\pi \mZ^d$, 
 there exists a $t \in \mC$ such that $p(  i \mathbf{v}, t)\neq 0 $. 
\end{theorem}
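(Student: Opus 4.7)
The proof follows the same template as Theorems~\ref{main_result_1} and \ref{main_result_sob}: take the spatial Fourier transform, use the lattice support imposed by spatial periodicity, and reduce the PDE to a family of constant-coefficient ODEs in~$t$ indexed by the reciprocal lattice $A^{-1}2\pi\mZ^d$.

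For the `only if' direction, suppose there exists $\mathbf{v}_0 \in A^{-1}2\pi\mZ^d$ with $p(i\mathbf{v}_0, t) = 0$ for every $t \in \mC$, so that $p(i\mathbf{v}_0, T)$ is the zero polynomial in $\mC[T]$. Since $\mathbf{a_k}\cdot \mathbf{v}_0 \in 2\pi\mZ$ by the definition of the reciprocal lattice, the exponential $e^{i\mathbf{v}_0 \cdot \bbx}$ is $\mathbf{a_k}$-periodic for each $k$, and so $u := e^{i\mathbf{v}_0 \cdot \bbx} \otimes \Theta$, with $\Theta$ as in \eqref{eqn_3_Dec_2019_15:55}, lies in $\calD'_{\mA}(\mR^{d+1})$, has zero past, and satisfies $D_p u = \sum_k a_k(i\mathbf{v}_0)\, e^{i\mathbf{v}_0 \cdot \bbx} \otimes \Theta^{(k)} = \mathbf{0}$. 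Hence $u$ is a nontrivial element of $N_S(p)$.

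For the `if' direction, assume that $p(i\mathbf{v}, T) \in \mC[T]$ is a nonzero polynomial for every $\mathbf{v} \in A^{-1}2\pi\mZ^d$, and let $u \in \calD'_{\mA}(\mR^{d+1})$ satisfy $u|_{t<0} = \mathbf{0}$ and $D_p u = \mathbf{0}$. For each fixed $\varphi \in \calD(\mR)$, the distribution $\vec{u}(\varphi) \in \calD'_{\mA}(\mR^d)$ has the Fourier representation recalled just before the theorem statement, yielding scalars $T_{\mathbf{v}}(\varphi)$ with $\widehat{\vec{u}(\varphi)} = \sum_{\mathbf{v}} T_{\mathbf{v}}(\varphi)\delta_{\mathbf{v}}$. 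A short verification --- using that $T_{\mathbf{v}}(\varphi)$ is extracted by pairing $\widehat{\vec{u}(\varphi)}$ with a Schwartz function equal to $1$ near $\mathbf{v}$ and vanishing on the other lattice points --- shows that each $T_{\mathbf{v}} \in \calD'(\mR)$. Applying the spatial Fourier transform to $D_p u = \mathbf{0}$, using $\widehat{\partial_t^k u}(\varphi) = (-1)^k \widehat{u}(\varphi^{(k)})$ together with $a_k(\bm{\partial}_{\bbx}) \leadsto a_k(i\bxi)$ on the Fourier side, the linear independence of the distinct $\delta_{\mathbf{v}}$ in $\calD'(\mR^d)$ yields
\[
p(i\mathbf{v}, \partial_t) T_{\mathbf{v}} = \mathbf{0} \qquad \text{for every } \mathbf{v} \in A^{-1}2\pi\mZ^d.
\]
Since $p(i\mathbf{v}, T)$ is a nonzero polynomial, $T_{\mathbf{v}}$ is a classical smooth solution of a constant-coefficient ODE, a finite linear combination of terms $t^\ell e^{\lambda t}$. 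The hypothesis $u|_{t<0} = \mathbf{0}$ transfers to $T_{\mathbf{v}}|_{t<0} = \mathbf{0}$ (taking $\varphi$ supported in $(-\infty, 0)$ gives $\vec{u}(\varphi) = 0$, hence $T_{\mathbf{v}}(\varphi) = 0$), forcing each $T_{\mathbf{v}} = \mathbf{0}$. Therefore $\widehat{\vec{u}(\varphi)} = \mathbf{0}$ for every $\varphi$, whence $\vec{u}(\varphi) = \mathbf{0}$ and $u = \mathbf{0}$.

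The main obstacle is bookkeeping within the vector-valued distribution framework: justifying that the coefficients $T_{\mathbf{v}}$ are genuine elements of $\calD'(\mR)$ and that the spatial Fourier identities commute cleanly with the action on $\varphi$. The underlying PDE issue is much simpler than for Theorem~\ref{main_result_3}, because periodicity forces the spatial frequencies to form a \emph{discrete} set, so each Fourier mode yields an independent constant-coefficient ODE rather than the variable-coefficient system handled by Lemma~\ref{main_lemma}; no Holmgren or \L ojasiewicz input is needed here.
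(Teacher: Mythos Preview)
Your proposal is correct and follows essentially the same approach as the paper: both directions use the construction $u=e^{i\mathbf{v}_0\cdot\bbx}\otimes\Theta$ for necessity, and for sufficiency both take the spatial Fourier transform, extract coefficient distributions $T_{\mathbf{v}}\in\calD'(\mR)$ from the lattice expansion of the periodic spatial profile, reduce to the constant-coefficient ODE $p(i\mathbf{v},\partial_t)T_{\mathbf{v}}=\mathbf{0}$, and conclude $T_{\mathbf{v}}=\mathbf{0}$ from the zero-past condition. The only cosmetic difference is that the paper isolates each mode by restricting to a small spatial neighbourhood of $\mathbf{v}$, whereas you invoke linear independence of the $\delta_{\mathbf{v}}$; these amount to the same thing.
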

\begin{proof} $\;$ 

\noindent {\bf `Only if' part:} Let $\mathbf{v_0}\in A^{-1}2\pi \mZ^d$ be such that for all $t\in \mC$, $p(i \mathbf{v_0}, t) =\mathbf{0}$. 
Then $p ( i\mathbf{v_0},T)$ is the zero polynomial in $\mC[T]$. 
 Let  $\Theta \in C^\infty(\mR)$ be any nonzero smooth function such that $\Theta|_{t<0}=\mathbf{0}$. 
Define $u:=e^{ i  \mathbf{v_0} \cdot \mathbf{x}}\otimes \Theta$. Here  $\mathbf{v_0}\cdot\mathbf{x}$ is the usual Euclidean 
inner product of $\mathbf{v_0}$ and $\mathbf{x}$ in the real vector space $\mR^d$.  
Then we have that $u\in \calD'_{\mA}(\mR^{d+1})$, since  
$$
\mathbf{S_{a_k}} u= e^{ i  \mathbf{v_0}\cdot (\mathbf{x}+\mathbf{a_k})}\otimes \Theta
=e^{i  \mathbf{v_0}\cdot \mathbf{a_k} }e^{i \mathbf{v_0}\cdot \mathbf{x}}\otimes \Theta
= 1\cdot e^{ i \mathbf{v_0}\cdot  \mathbf{x} }\otimes \Theta= u.
$$
Moreover, $u$ has zero past, that is, $u|_{t<0}=\mathbf{0}$ because $\Theta|_{t<0}=\mathbf{0}$. 
Also, the trajectory $u\in N_{\calD'_{\mA}(\mR^{d+1})}(p)\setminus \{\mathbf{0}\}$ because $\Theta \neq \mathbf{0}$ and 
 $$
p\left(\frac{\partial}{\partial x_1},\cdots, \frac{\partial}{\partial x_d} , \frac{\partial}{\partial t}\right) 
 u= e^{i \mathbf{v_0}\cdot \mathbf{x}}  p\left( i \mathbf{v_0} , \frac{d}{dt} \right)  \Theta
=e^{ i \mathbf{v_0}\cdot \mathbf{x}} \cdot \mathbf{0}=\mathbf{0}.
$$ 
 This completes the proof of the `only if' part. 

\bigskip 

\noindent  (`If' part:) On the other hand, now suppose that for each 
$\mathbf{v}\in A^{-1}2\pi \mZ^d$, there exists a $t\in \mC$ such that 
$p(  i \mathbf{v}, t)\neq 0 $. Then the polynomial $p( i\mathbf{v},T)$ is not the zero polynomial in $\mC[T]$.
 So  $N_{\calD'(\mR)}(p( i \mathbf{v},T))=\{\mathbf{0}\}$. Thus for each 
$\mathbf{v}\in A^{-1}2\pi \mZ^d$, whenever a distribution 
$T\in \calD'(\mR)$ is such that $T|_{t<0}=\mathbf{0}$ and satisfies 
$$
p\left( i \mathbf{v},\frac{d}{dt}\right)T=\mathbf{0},
$$
there holds that $T=\mathbf{0}$. Now suppose that $u\in \calD'_{\mA}(\mR^{d+1})$ satisfies 
$u|_{t<0}=\mathbf{0}$ and 
\begin{equation}
 \label{eq_pf_if_part}
p\left(\frac{\partial}{\partial x_1},\cdots, \frac{\partial}{\partial x_d} , \frac{\partial}{\partial t}\right) 
 u=\mathbf{0}.
\end{equation} 
 Upon taking Fourier transformation on both
sides of the equation \eqref{eq_pf_if_part} with respect to the spatial
variables, we obtain
\begin{equation}
\label{eq_ast}
p\left(i \mathbf{\bxi}, \frac{\partial}{\partial t} \right) \widehat{u} =\mathbf{0}.
\end{equation}
For each fixed $\varphi \in \calD(\mR)$, $\vec{u}(\varphi)\in \calD_{\mA}'(\mR^d)$, and so it 
follows that 
\begin{equation}
\label{eq_ast2}
\reallywidehat{\overrightarrow{u}(\varphi)}=\sum_{\mathbf{v} \in A^{-1} 2\pi \mZ^d} \delta_{\mathbf{v}} \;\! \alpha_{\mathbf{v}}(\widehat{u},\varphi) ,
\end{equation}
for appropriate coefficients $\alpha_{\mathbf{v}}(\widehat{u},\varphi)\in \mC$. In particular, 
we see that the support of $\widehat{u}$ is contained in $A^{-1} 2\pi \mZ^d \times [0,+\infty)$. 
Thus each of the half lines in $A^{-1} 2\pi \mZ^d \times [0,+\infty)$ carries a solution of the differential
equation \eqref{eq_ast}, and $\widehat{u}$ is a sum of these. We will
show that each of these summands is zero. It can be seen from \eqref{eq_ast2} that the map 
$\varphi \mapsto  \alpha_{\mathbf{v}}(\widehat{u},\varphi) :\calD(\mR) \rightarrow \mC$ defines a distribution 
$T^{(\mathbf{v})}$ in $\calD'(\mR)$. Moreover, the support of $T^{(\mathbf{v})}$ is contained in $[0,+\infty)$.  
From \eqref{eq_ast2}, we see that for a small enough neighbourhood $N$ of $\mathbf{v} \in A^{-1} 2\pi \mZ^d$ in $\mR^d$, we have 
$$
\delta_{\mathbf{v}} \otimes p\left( i \mathbf{v}, \frac{d}{d t} \right)  T^{(\mathbf{v})}=\mathbf{0}
$$
in $N\times \mR$. 
But as we had seen above, our algebraic hypothesis implies that the set $N_{\calD'(\mR)} (p( i \mathbf{v},T))$ of  null solutions is trivial,   i.e. 
$N_{\calD'(\mR)} (p( i \mathbf{v},T))=\{\mathbf{0}\}$, and so $T^{(\mathbf{v})}=\mathbf{0}$. 
As this happens with each $\mathbf{v}\in A^{-1}2\pi \mZ^d$, we conclude that $\widehat{u}=\mathbf{0}$ and hence 
also $u=\mathbf{0}$. Consequently,  $N_{\calD'_{\mA}(\mR^{d+1})}(p)=\{\mathbf{0}\}$.
\end{proof}

\section{Open question: For which $p$ is the set of futures of null solutions dense in the set of futures of all solutions?} 

\noindent It follows from \cite{Hor3} that the set of futures of smooth null solutions, namely 
$$
\{u|_{t>0}: u\in N_{\textrm{C}^\infty(\mR^{d+1})}(p)\} 
$$
is dense in the set of futures of all smooth solutions, namely 
$$
\{u|_{t>0}: u\in \textrm{C}^\infty(\mR^{d+1}) \textrm{ and } D_p u=\mathbf{0}\}
$$
if each irreducible factor $p'$ of $p$ satisfies  $\deg(p')\neq \deg(p'(\mathbf{0},T))$. 
 
 In our alternative solution spaces, one could ask a 
similar question, namely if it is possible to give a characterisation in terms of the polynomial $p$ 
so that the set of futures of null solutions is dense in the set of futures of all solutions. 
We leave this class of an open questions for future investigation.

\medskip

\noindent {\bf Acknowledgements:} $\;$

\noindent I am grateful to Sara Maad Sasane and 
Arne Meurman (Lund University) for useful discussions.

\goodbreak

\end{document}